\newcommand{\lC}{L}
\theoremstyle{plain}
\newcommand{\newreptheorem}[2]{\newtheorem*{rep@#1}{\rep@title}\newenvironment{rep#1}[1]{\def\rep@title{#2 \ref*{##1}}\begin{rep@#1}}{\end{rep@#1}}}
\newtheorem{theorem}{Theorem}
\newtheorem*{theorem-non}{Theorem}
\newtheorem*{non-lemma}{Lemma}
\newtheorem{lemma}[theorem]{Lemma}
\newtheorem{claim}[theorem]{Claim}
\newtheorem{conjecture}[theorem]{Conjecture}
\newtheorem{problem}[theorem]{Problem}
\theoremstyle{definition}
\DeclareMathOperator{\Sub}{Sub}
\DeclareMathOperator{\RowS}{RowSpace}
\DeclareMathOperator{\cok}{cok}
\DeclareMathOperator{\supp}{supp}
\DeclareMathOperator{\Sur}{Sur}
\DeclareMathOperator{\Hom}{Hom}
\DeclareMathOperator{\Aut}{Aut}
\DeclareMathOperator{\Sg}{Sub}
\begin{document}
\title{A phase transition for the cokernels of random band matrices over the p-adic integers}
\author{Andr\'as M\'esz\'aros}
\date{}
\affil{HUN-REN Alfr\'ed R\'enyi Institute of Mathematics}
\maketitle
\begin{abstract}
Let $\mathbf{B}_n$ be an $n\times n$ Haar-uniform band matrix over $\mathbb{Z}_p$ with band width $w_n$. We prove that $\cok(\mathbf{B}_n)$ has Cohen-Lenstra limiting distribution if and only if \[\lim_{n\to\infty} \left(w_n-\log_p(n)\right)=+\infty.\]

\end{abstract}

\section{Introduction}

Let $p$ be a prime. Given two positive integers $n$ and $w$, let $\mathcal{B}_{n,w}$ be the set of $n\times n$ band matrices over the $p$-adic integers $\mathbb{Z}_p$ with band width $w$, that is, let 
\[\mathcal{B}_{n,w}=\{B\in M_n(\mathbb{Z}_p)\,:\,B(i,j)=0\text{ for all }i,j\text{ such that }|i-j|>w\}.\]

Let $w_n$ be a sequence of positive integers, and let $\mathbf{B}_n$ be a Haar uniform element of $\mathcal{B}_{n,w_n}$. The cokernel of $\mathbf{B}_n$ is the random abelian $p$-group
\[\cok(\mathbf{B}_n)=\mathbb{Z}_p^n/\RowS(\mathbf{B}_n),\]
where $\RowS(\mathbf{B}_n)$ is the $\mathbb{Z}_p$-submodule of $\mathbb{Z}_p^n$ generated by the rows of $\mathbf{B}_n$. 

Our main theorem states that $\cok(\mathbf{B}_n)$ exhibits a phase transition. If $w_n$ grows fast enough, then  $\cok(\mathbf{B}_n)$ has Cohen-Lenstra limiting distribution. If $w_n$ does not grow fast enough, then $\cok(\mathbf{B}_n)$ does not have Cohen-Lenstra limiting distribution. We determine precisely where this phase transition happens.

\begin{theorem}\hfill\label{thmmain}
\begin{enumerate}[(a)]
\item If $\lim_{n\to\infty} \left(w_n-\log_p(n)\right)=+\infty$, then $\cok(\mathbf{B}_n)$ has Cohen-Lenstra limiting distribution, that is, for all finite abelian $p$-groups $G$, we have
\begin{equation}\label{CohenLenstra}\lim_{n\to\infty} \mathbb{P}(\cok(\mathbf{B}_n)\cong G)=\frac{1}{|\Aut(G)|}\prod_{j=1}^{\infty}\left(1-p^{-j}\right).\end{equation}
\item If $w_n-\log_p(n)$ does not converge to $+\infty$, then $\cok(\mathbf{B}_n)$ does not have Cohen-Lenstra limiting distribution.
\end{enumerate}
\end{theorem}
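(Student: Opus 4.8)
\section*{Proof proposal}

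The plan is to run the moment method in both directions: I will show that for every finite abelian $p$-group $G$ the quantity $\mathbb{E}[\#\Sur(\cok(\mathbf{B}_n),G)]$ tends to $1$ as $n\to\infty$ if and only if $w_n-\log_p(n)\to+\infty$. Since the Cohen--Lenstra measure has all its $\#\Sur$-moments equal to $1$ and is determined by them, the ``if'' direction together with the moment-problem theorem of Sawin and Wood gives (a), while the ``only if'' direction applied to $G=\mathbb{Z}/p$ gives (b). Everything rests on the identity
\[
\mathbb{E}[\#\Sur(\cok(\mathbf{B}_n),G)]=\sum_{\substack{x\in G^{n}\\ \langle x\rangle=G}}\ \prod_{i=1}^{n}\frac{1}{|H_i(x)|},\qquad H_i(x):=\langle\, x_j : |i-j|\le w_n \,\rangle\le G,
\]
which holds because a surjection $\cok(\mathbf{B}_n)\twoheadrightarrow G$ is a surjective homomorphism $\phi\colon\mathbb{Z}_p^{n}\to G$ with $\mathbf{B}_n x=0$ in $G^{n}$ for $x=(\phi(e_1),\dots,\phi(e_n))$; the $n$ coordinate equations of $\mathbf{B}_n x=0$ involve disjoint blocks of the entries of $\mathbf{B}_n$, hence are independent, and for fixed $x$ the $i$-th one holds with probability $|H_i(x)|^{-1}$ because $(c_j)_j\mapsto\sum_j c_j x_j$ pushes Haar measure on $\mathbb{Z}_p^{W_i}$ forward to Haar measure on $H_i(x)$, where $W_i:=\{\,j:|i-j|\le w_n\,\}$.

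For part (a) I would split this sum into a \emph{generic} part, consisting of the $x$ for which every window has $H_i(x)=G$, and an error part. On the generic part each factor equals $|G|^{-1}$, and a union bound over the $n$ windows and the $O_G(1)$ maximal subgroups of $G$ bounds the number of non-generic $x\in G^{n}$ by $|G|^{n}\cdot O_G(\sum_i p^{-|W_i|})=|G|^{n}\cdot O_G(p^{-w_n}+np^{-2w_n})$, which is $o(|G|^{n})$ already when $w_n-\tfrac12\log_p(n)\to+\infty$; so the generic part tends to $1$. For the error part I would group the ``deficient'' coordinates of $x$ into maximal \emph{window-connected blocks} (sets whose consecutive elements differ by at most $2w_n$, so that distinct blocks have disjoint window-unions and contribute multiplicatively via $\prod_i|H_i(x)|^{-1}=\mathbb{P}(\mathbf{B}_n x=0)$), sort each block according to the maximal subgroup into which the corresponding $x_j$ fall, and estimate the resulting single-block sum by a transfer-matrix computation. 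Summing over the internal structure of a block with endpoints $a<b$ yields $p^{-2w_n+O(1)}$ when the block avoids both ends of $\{1,\dots,n\}$ (the $p^{\,b-a}$ growth of the weighted count cancels the factor $p^{-(b-a)}$ from $|\bigcup W_j|=(b-a)+2w_n+1$), hence $O(n^{2}p^{-2w_n})$ after summing over $a,b$; when the block meets an end it yields only $p^{-w_n+O(1)}$, hence $O(np^{-w_n})$ in total. Thus the error is $O(np^{-w_n}+n^{2}p^{-2w_n})=o(1)$ under our hypothesis, all moments tend to $1$, and the moment theorem yields (a).

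For part (b), suppose $w_n-\log_p(n)\not\to+\infty$. Then along some subsequence $w_n\le\log_p(n)+M$ for a fixed $M$, and after refining it $np^{-w_n}$ converges to some $\ell\in[p^{-M},+\infty]$. If $\ell=+\infty$, the single-block lower bound below already forces $\mathbb{E}[\#\Sur(\cok(\mathbf{B}_n),\mathbb{Z}/p)]\to+\infty$ along the subsequence, so assume $\ell<+\infty$, whence $w_n=\log_p(n)+O(1)$ there. With $G=\mathbb{Z}/p$ the identity reads $\mathbb{E}[\#\Sur(\cok(\mathbf{B}_n),\mathbb{Z}/p)]=\sum_{\varnothing\ne S}(p-1)^{|S|}p^{-|\bigcup_{j\in S}W_j|}$ (sum over nonempty $S\subseteq\{1,\dots,n\}$), and I keep only the single-block, i.e.\ window-connected, $S$, splitting them by which ends of $\{1,\dots,n\}$ their window-union meets. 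Blocks meeting both ends have $|\bigcup W_j|=n$ and total weight $\sum(p-1)^{|S|}=p^{n}(1-o(1))$ on this subsequence (a random $S$ with $\mathbb{P}(j\in S)=(p-1)/p$ is window-connected and reaches both ends with probability $1-O(np^{-2w_n}+p^{-w_n})=1-o(1)$ since $w_n=\log_p(n)+O(1)$), so they contribute $1-o(1)$; and a transfer-matrix estimate shows that window-connected $S$ meeting only the left end, with right endpoint $b$ bounded away from $n$, have weighted count $\asymp p^{\,b}$, hence each contributes $\asymp p^{-w_n}$ and the total is $\asymp np^{-w_n}\asymp\ell>0$. So this moment is $\ge 1+c$ for some $c>0$ infinitely often, hence does not converge to $1$, and $\cok(\mathbf{B}_n)$ has no Cohen--Lenstra limiting distribution.

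The crux is the error estimate in part (a): one must show that the non-generic tuples contribute $o(1)$ with the sharp threshold $\log_p(n)$ and not $\tfrac12\log_p(n)$, the subtle point being that it is the boundary (``corner'') blocks, not the bulk ones, that dominate and fix the transition point. Making the window-connected-block decomposition rigorous for a general $G$ — where $H_i(x)$ runs over all subgroups of $G$, so that one faces nested sums over chains of subgroups rather than the dichotomy $0$ versus $G$ — and making the transfer-matrix bounds uniform in $n$ is the technical heart of the argument.
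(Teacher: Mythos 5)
Your part (a) follows the same route as the paper: the surjection-moment identity is the paper's Lemma~\ref{lemmamoment}, and your decomposition into a generic part plus window-connected blocks of deficient coordinates, with bulk blocks contributing $O(n^{2}p^{-2w_n})$ and boundary blocks $O(np^{-w_n})$, is exactly the structure of the paper's Lemma~\ref{lemmaalpha} and the estimates \eqref{sump1}--\eqref{sump4}; you correctly identify that the corner blocks set the threshold at $\log_p(n)$. The piece you defer as ``the technical heart'' (general $G$, where the windows generate arbitrary subgroups) is handled in the paper by an induction over the subgroups of $G$ ordered by decreasing size, with the recursion $\alpha_i=(q\beta_i+2t)^2\exp(q^2\beta_i)$ absorbing the nested sums; this would need to be supplied, but your outline is compatible with it.

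Part (b) has a genuine gap. You deduce that $\cok(\mathbf{B}_n)$ has no Cohen--Lenstra limit from the fact that $\mathbb{E}|\Sur(\cok(\mathbf{B}_n),\mathbb{Z}/p)|\ge 1+c$ infinitely often. But the moment method is a one-way implication: convergence of the surjective moments to $1$ implies convergence in distribution to Cohen--Lenstra, not conversely. Since $|\Sur(\cok(\mathbf{B}_n),\mathbb{Z}/p)|=p^{\dim\ker\overline{\mathbf{B}}_n}-1$ is unbounded, convergence in distribution to Cohen--Lenstra does not force convergence of this expectation without uniform integrability; a priori the excess $c$ in the moment could be carried entirely by events of vanishing probability (e.g.\ $\dim\ker\overline{\mathbf{B}}_n=n$ with probability $cp^{-n}$), which would leave the limiting distribution untouched. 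So a moment lower bound alone does not contradict the Cohen--Lenstra limit. The paper avoids this by proving a genuinely distributional statement (Theorem~\ref{thmloc}): for infinitely many $n$ one can find $\lC$ disjoint coordinate blocks, each of which supports a kernel vector with probability at least $(C\varepsilon/\lC)^4$ by a Paley--Zygmund (second-moment) argument, and independence across blocks gives $\mathbb{P}(\dim\ker\overline{\mathbf{B}}_n\ge\lC)\ge(\kappa/\lC)^{4\lC}$, which for large $\lC$ exceeds the Cohen--Lenstra tail $\asymp p^{-\lC^2}$. Your first-moment computation does locate the right culprits (localized kernel vectors near the boundary and, in the supercritical case, in the bulk), but to finish you must convert it into a probability lower bound, e.g.\ by adding a second-moment estimate for the number of such localized vectors and invoking Paley--Zygmund, and then amplify over many disjoint blocks to beat the $p^{-\lC^2}$ decay of the Cohen--Lenstra rank distribution.
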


The distribution on the right hand side of \eqref{CohenLenstra} is called the Cohen-Lenstra distribution. It first appeared in a conjecture about the distribution of class groups of quadratic number fields~\cite{cohen2006heuristics}. We prove the first part of Theorem~\ref{thmmain} by using the moment method of Wood~\cite{wood2017distribution}. By now the moment method is considered the standard method of establishing the Cohen-Lenstra limiting distribution of cokernels of random matrices. See \cite{wood2019random,nguyen2022random,recent1,recent2,recent3,recent4,recent5,recent6,recent7,recent8,recent9,recent10,meszaros2020distribution,meszaros2023cohen} for some recent examples. See also \cite{sawin2022moment,van2024symmetric,wood2022probability} for further results on the moment problem. With the choice of $w_n=n$, Theorem~\ref{thmmain} gives back the classical result of Friedman and Washington~\cite{friedman1987distribution} on the cokernels of Haar uniform matrices over $\mathbb{Z}_p$. Theorem~\ref{thmmain} provides us random matrices with only $O(n\log n)$ non-zero entries and Cohen-Lenstra limiting behaviour. Note that the author provided random matrices with $O(n)$ non-zero entries and Cohen-Lenstra limiting behaviour~\cite{meszaros2020distribution,meszaros2023cohen}. Excluding these examples all other results mentioned above are about denser random matrices. 

The second part of Theorem~\ref{thmmain} is proved by showing that for small band widths, the kernel of the mod $p$ reduction of $\mathbf{B}_n$ contains several localized vectors with sufficiently large probability, see Theorem~\ref{thmloc}. This phenomenon can be viewed as an analogue of the localization/delocalization phase transition of Gaussian band matrices as we explain in Section~\ref{secGaussian}. 

\subsection{Band matrices over $\mathbb{F}_p$}

Let $\overline{\mathbf{B}}_n\in M_n(\mathbb{F}_p)$ be the mod $p$ reduction of $\mathbf{B}_n$. In other words, $\overline{\mathbf{B}}_n$ is a uniform random element of the set 
\[\{B\in M_n(\mathbb{F}_p)\,:\,B(i,j)=0\text{ for all }i,j\text{ such that }|i-j|>w_n\}.\]

The first part of Theorem~\ref{thmmain} has the following corollary.

\begin{theorem}\label{thmrank}
Assume that $\lim_{n\to\infty} w_n-\log_p(n)=+\infty$. Then for all nonnegative integer $k$, we have
\[\lim_{n\to\infty} \mathbb{P}(\dim \ker \overline{\mathbf{B}}_n=k)=p^{-k^2} \prod_{j=1}^{k} \left(1-p^{-j}\right)^{-2} \prod_{j=1}^{\infty}\left(1-p^{-j}\right).\]
\end{theorem}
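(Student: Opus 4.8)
The plan is to derive this as a direct corollary of Theorem~\ref{thmmain}(a) by a standard bookkeeping argument that relates the distribution of $\cok(\mathbf{B}_n)$ to the distribution of $\dim\ker\overline{\mathbf{B}}_n$. The key observation is that $\dim\ker\overline{\mathbf{B}}_n$ equals the rank of $\cok(\mathbf{B}_n)/p\cdot\cok(\mathbf{B}_n)$ as an $\mathbb{F}_p$-vector space; equivalently, if $\cok(\mathbf{B}_n)\cong G$, then $\dim\ker\overline{\mathbf{B}}_n = \operatorname{rank}_p(G)$, the minimal number of generators of $G$. Indeed, reducing $\mathbf{B}_n$ mod $p$ is the same as tensoring the exact sequence $\mathbb{Z}_p^n\xrightarrow{\mathbf{B}_n}\mathbb{Z}_p^n\to\cok(\mathbf{B}_n)\to 0$ with $\mathbb{F}_p$, and right-exactness of $-\otimes\mathbb{F}_p$ gives $\operatorname{cok}(\overline{\mathbf{B}}_n)\cong\cok(\mathbf{B}_n)\otimes\mathbb{F}_p\cong G/pG$, which has dimension $\operatorname{rank}_p(G)$; since $\overline{\mathbf{B}}_n$ is a square matrix over a field, $\dim\ker\overline{\mathbf{B}}_n=\dim\operatorname{cok}(\overline{\mathbf{B}}_n)$.

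Given this, the plan is to compute $\lim_{n\to\infty}\mathbb{P}(\dim\ker\overline{\mathbf{B}}_n=k)$ by summing the Cohen-Lenstra probabilities from \eqref{CohenLenstra} over all finite abelian $p$-groups $G$ with $\operatorname{rank}_p(G)=k$. That is, one wants
\[
\lim_{n\to\infty}\mathbb{P}(\dim\ker\overline{\mathbf{B}}_n=k)=\prod_{j=1}^{\infty}(1-p^{-j})\sum_{\operatorname{rank}_p(G)=k}\frac{1}{|\Aut(G)|},
\]
where the interchange of the limit and the (infinite) sum over isomorphism classes must be justified — this is the one genuinely non-formal point. One way is to truncate: for each $M$, the groups $G$ with $\operatorname{rank}_p(G)=k$ and $|G|\le p^M$ form a finite set, Theorem~\ref{thmmain}(a) handles each of them, and one needs a tail bound showing $\mathbb{P}(|\cok(\mathbf{B}_n)|>p^M,\ \dim\ker\overline{\mathbf{B}}_n=k)$ is small uniformly in $n$ (or: that $\sum_{\operatorname{rank}_p(G)=k,\ |G|>p^M}|\Aut(G)|^{-1}$ is small, together with the fact that the limiting measure is a genuine probability measure). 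The cleanest justification is to invoke that the total mass of the Cohen-Lenstra measure is $1$ (so no mass escapes), combined with Fatou / the fact that weak convergence of the $\cok$ distributions to Cohen-Lenstra — which follows from Theorem~\ref{thmmain}(a) since that theorem gives convergence on every group — automatically pushes forward to convergence of the $\operatorname{rank}_p$ statistic.

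The remaining step is the purely combinatorial identity
\[
\prod_{j=1}^{\infty}(1-p^{-j})\sum_{\operatorname{rank}_p(G)=k}\frac{1}{|\Aut(G)|}
= p^{-k^2}\prod_{j=1}^{k}(1-p^{-j})^{-2}\prod_{j=1}^{\infty}(1-p^{-j}).
\]
Equivalently, $\sum_{\operatorname{rank}_p(G)=k}|\Aut(G)|^{-1}=p^{-k^2}\prod_{j=1}^{k}(1-p^{-j})^{-2}$. This is a known generating-function computation: parametrize abelian $p$-groups of rank $k$ by partitions with exactly $k$ parts (conjugate to partitions with largest part $k$), use the formula for $|\Aut(G)|$ in terms of the partition $\lambda$, and sum the resulting $q$-series (with $q=p^{-1}$) using standard $q$-series identities (a form of the Cauchy identity or the Hall–Littlewood orthogonality). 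Alternatively, and perhaps most efficiently, one recognizes the right-hand side as exactly the Friedman–Washington formula for $\mathbb{P}(\dim\ker = k)$ in the classical dense case $w_n=n$: since Theorem~\ref{thmmain} with $w_n=n$ recovers Friedman–Washington, the identity is already implicitly established there and one can simply cite it. I expect the main (though minor) obstacle to be the interchange-of-limits / tightness justification; the algebraic identity itself is routine and citable.
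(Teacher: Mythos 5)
Your proposal is correct and follows essentially the same route as the paper: the paper deduces Theorem~\ref{thmrank} from Theorem~\ref{thmmain}(a) via Lemma~\ref{CLrank}, whose content (the passage from the Cohen--Lenstra distribution of the cokernel to the distribution of $\dim\ker$ of the mod~$p$ reduction, including the identity $\sum_{\mathrm{rank}_p(G)=k}|\Aut(G)|^{-1}=p^{-k^2}\prod_{j=1}^k(1-p^{-j})^{-2}$ and the no-escape-of-mass argument) is exactly what you spell out, and which the paper simply cites from Cohen--Lenstra. Your identification of the limit-interchange as the only non-formal point, handled by Fatou plus the fact that the limiting measure has total mass one, is the standard and correct justification.
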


Next we consider the case when $w_n-\log_p(n)$ does not converge to $+\infty$.

Given two positive integers $n$ and $\lC $, we set $n_0=\left\lfloor\frac{n}\lC \right\rfloor$, and for $i=1,2,\dots,\lC$, we define
\[V_i=\left\{v\in \mathbb{F}_p^n\,:\,\emptyset\neq \supp v\subset \left[(i-1)n_0+1\,,\,i n_0\right]\right\}.\]

We say that $\overline{\mathbf{B}}_n$ has an $\lC $-localized kernel if the following event occurs:
\[\text{For all }i=1,2,\dots,\lC, \text{ there is a }v_i\in V_i\text{ such that }\overline{\mathbf{B}}_n v_i=0.\]

Observe that for $i\neq j$ and $v_i\in V_i,v_j\in V_j$, the vectors $v_i$ and $v_j$ do not interact in the sense that the events  $\overline{\mathbf{B}}_n v_i=0$ and $\overline{\mathbf{B}}_n v_j=0$ depend on disjoint subsets of the entries of~$\overline{\mathbf{B}}_n$. In particular, these two events are independent.  

Note that if $\overline{\mathbf{B}}_n$ has an $\lC $-localized kernel, then $\dim \ker \overline{\mathbf{B}}_n\ge \lC $.

\begin{theorem}\label{thmloc}
Assume that $ w_n-\log_p(n)$ does not converge to $+\infty$. Then there is a $\kappa>0$ with the property that for all positive integers $\lC $, we have infinitely many $n$ such that
\[\mathbb{P}(\,\overline{\mathbf{B}}_n\text{ has an $\lC $-localized kernel}\,)\ge \left(\frac{\kappa}\lC \right)^{4\lC }.\]
Consequently, for all positive integer $\lC $, we have infinitely many $n$ such that
\begin{equation}\label{heavytail}\mathbb{P}(\dim\ker \overline{\mathbf{B}}_n\ge \lC )\ge \left(\frac{\kappa}\lC \right)^{4\lC }.
\end{equation}
\end{theorem}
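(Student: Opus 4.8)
The plan is to prove the slightly stronger statement that for every block $i$ the matrix $\overline{\mathbf{B}}_n$ has a kernel vector supported in $B_i:=[(i-1)n_0+1,\,in_0]$ with probability $\gtrsim_{p,C} L^{-2}$, and then to multiply over the $L$ blocks. First I would unpack the hypothesis: since $w_n-\log_p n$ does not tend to $+\infty$ there are a constant $C$ and infinitely many $n$ with $w_n\le\log_p n+C$, i.e.\ $p^{w_n}\le p^{C}n$; fix $L$ and work with such an $n$, taken large. Set $X_i=\#\{v\in V_i:\overline{\mathbf{B}}_n v=0\}=p^{\dim K_i}-1$, where $K_i=\ker\overline{\mathbf{B}}_n\cap\{v:\operatorname{supp}v\subseteq B_i\}$. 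As noted in the text the events $\{X_i\ge1\}$ depend on pairwise disjoint entries of $\overline{\mathbf{B}}_n$ and are independent, so $\mathbb{P}(\overline{\mathbf{B}}_n\text{ has an $L$-localized kernel})=\prod_{i=1}^L\mathbb{P}(X_i\ge1)$, and it suffices to bound each factor.

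The first step is a first–moment estimate. For $r\in\{1,\dots,n\}$ write $\beta(r)=\{c:|r-c|\le w_n\}$ and, for $S\subseteq[1,n]$, $R(S)=|\{r:\beta(r)\cap S\ne\emptyset\}|$. A fixed nonzero $v$ with $\operatorname{supp}v=S$ satisfies $\overline{\mathbf{B}}_n v=0$ with probability exactly $p^{-R(S)}$ — each row $r$ contributes a nontrivial, hence probability-$1/p$, linear equation in its own i.i.d.\ uniform entries precisely when $\beta(r)$ meets $S$, and different rows are independent — so $\mathbb{E}X_i=\sum_{\emptyset\ne S\subseteq B_i}(p-1)^{|S|}p^{-R(S)}$. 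Restrict this sum to those $S$ whose convex hull is a fixed interval $I\subseteq B_i$ of length $\ell$, with both endpoints of $I$ in $S$ and every gap of $S$ at most $2w_n$. For such $S$ one always has $R(S)\le |I|+2w_n=\ell+2w_n$, while an inclusion–exclusion on which gap (if any) exceeds $2w_n$ gives $\sum_{S}(p-1)^{|S|}\ge (p-1)^2p^{\ell-2}\bigl(1-\tfrac{(p-1)}{p}\,\ell\,p^{-2w_n}\bigr)\ge\tfrac12(p-1)^2p^{\ell-2}$ whenever $\ell\le\tfrac12 p^{2w_n}$. Summing the resulting bound $\tfrac{(p-1)^2}{2p^2}p^{-2w_n}$ over the $\gtrsim n_0$ intervals of each length $\ell\le\min(\lfloor n_0/2\rfloor,\lfloor p^{2w_n}/2\rfloor)$ yields $\mathbb{E}X_i\gtrsim_p\min(n_0,\,n_0^2p^{-2w_n})$; since $n_0\ge n/(2L)$ and $p^{-2w_n}\ge p^{-2C}n^{-2}$, for $n$ large this is $\ge\mu:=c_{p,C}\,L^{-2}$ for an explicit $c_{p,C}>0$.

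The main work is the matching second–moment bound $\mathbb{E}[X_i^2]\le C_p\bigl(\mathbb{E}X_i+(\mathbb{E}X_i)^2\bigr)$ with $C_p$ depending only on $p$. Expanding $\mathbb{E}[X_i^2]$ as a sum over ordered pairs $(v,v')$ of nonzero vectors supported in $B_i$, one computes $\mathbb{P}(\overline{\mathbf{B}}_n v=0,\ \overline{\mathbf{B}}_n v'=0)=p^{-\rho(W)}$ where $W=\langle v,v'\rangle$ and $\rho(W)=\sum_r\dim\bigl(W|_{\beta(r)}\bigr)$ — the joint equations at row $r$ restrict that row's entries to the annihilator of $W|_{\beta(r)}$. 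The pairs with $\dim W=1$ contribute exactly $(p-1)\mathbb{E}X_i$; grouping the pairs with $\dim W=2$ by the plane $W$ gives $(p^2-1)(p^2-p)\sum_{\dim W=2}p^{-\rho(W)}$. Using the reduced row echelon form of $W$ (pivots $c_1<c_2$; basis $v_1,v_2$ with $v_1(c_1)=1$, $v_1(c_2)=0$, $v_2(c_2)=1$, $v_2(c)=0$ for $c<c_2$) one checks that $\dim(W|_{\beta(r)})=2$ as soon as $\beta(r)$ meets $\operatorname{supp}v_2$ and contains $c_1$, or meets $\operatorname{supp}v_1$ and contains $c_2$; hence $\rho(W)\ge R(\operatorname{supp}v_1)+R(\operatorname{supp}v_2)$ except for rows $r$ lying entirely to the right of $c_2+w_n$, where $v_1,v_2$ have only their free coordinates and the deficit is the number of such bands on which those two free parts happen to be parallel. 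Bounding this parallel–deficit, and then summing over $(c_1,c_2)$ and the free coordinates, separates the $\dim W=2$ contribution into a "diagonal" part $\lesssim_p\mathbb{E}X_i$ (when $v_1,v_2$ have essentially the same support) and a "product" part $\lesssim_p(\mathbb{E}X_i)^2$. This case analysis of the parallel–deficit is the technical heart and the step I expect to be the main obstacle; everything else is bookkeeping.

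Granting the second–moment bound, Paley–Zygmund gives $\mathbb{P}(X_i\ge1)\ge(\mathbb{E}X_i)^2/\mathbb{E}[X_i^2]\ge \mathbb{E}X_i/\bigl(C_p(1+\mathbb{E}X_i)\bigr)\ge \mu/(2C_p)=:c'_{p,C}L^{-2}$. Multiplying over the $L$ independent blocks, $\mathbb{P}(\overline{\mathbf{B}}_n\text{ has an $L$-localized kernel})\ge(c'_{p,C}L^{-2})^L=(c'_{p,C})^{L}L^{-2L}\ge(\kappa/L)^{4L}$ once $\kappa=\kappa(p,C):=\min(1,(c'_{p,C})^{1/4})$, which is the asserted bound; since it holds for all sufficiently large $n$ among the infinitely many with $w_n\le\log_p n+C$, it holds for infinitely many $n$. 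Finally, if $\overline{\mathbf{B}}_n$ has an $L$-localized kernel, witnesses $v_1\in V_1,\dots,v_L\in V_L$ have pairwise disjoint supports, hence are linearly independent elements of $\ker\overline{\mathbf{B}}_n$, so $\dim\ker\overline{\mathbf{B}}_n\ge L$; this yields \eqref{heavytail}.
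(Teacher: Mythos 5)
There is a genuine gap at the second--moment step, and it is not a repairable technicality of the ``parallel--deficit'' bookkeeping you defer: the inequality $\mathbb{E}[X_i^2]\le C_p\bigl(\mathbb{E}X_i+(\mathbb{E}X_i)^2\bigr)$ is simply false in part of the regime covered by the hypothesis. The hypothesis only gives infinitely many $n$ with $np^{-w_n}\ge\varepsilon$, which includes the case of bounded (even constant) $w_n$ with $n_0\to\infty$. In that case $X_i=p^{\dim K_i}-1$ where $K_i=\ker\overline{\mathbf{B}}_n\cap\{v:\supp v\subseteq B_i\}$ is a subspace, and $\dim K_i$ stochastically dominates the number of all-zero columns of $\overline{\mathbf{B}}_n$ indexed by $B_i$, which is (essentially) a $\mathrm{Bin}(n_0-O(w),p^{-(2w+1)})$ variable with independent summands. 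Hence $\mathbb{E}[X_i^2]\ge\mathbb{E}\bigl[p^{2\dim K_i}\bigr]-2\,\mathbb{E}\bigl[p^{\dim K_i}\bigr]+1\ge\bigl(1+(p^2-1)p^{-(2w+1)}\bigr)^{n_0-O(w)}-O\bigl(\mathbb{E}X_i\bigr)$ grows exponentially in $n_0$, while your own first--moment computation shows $\mathbb{E}X_i\asymp\min(n_0,n_0^2p^{-2w})$ is only polynomial in $n_0$. So Paley--Zygmund applied to $X_i$ over the whole block yields a bound tending to $0$, and no case analysis of pairs $(v,v')$ can rescue the claimed inequality because the left-hand side genuinely dwarfs the right.

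The missing idea is a \emph{truncation of the support length}, which is exactly how the paper proceeds: it replaces $X_i$ by $Y_i$, the number of kernel vectors supported in a sub-window of $B_i$ of length $n_1=\min(n_0,p^{w})$. One still has $\mathbb{E}Y_i\gtrsim n_1^2p^{-2w}\gtrsim(\varepsilon/\lC)^2$, but now the pair count $\mathbb{E}Y_i^2$ is at most the expected number of $(\mathbb{F}_p\times\mathbb{F}_p)$-valued vectors killed by the $n_2\times n_2$ submatrix $\mathbf{B}'$ with $n_2\le n_1+2w\le 3p^{w}$; since $n_2p^{-w}\le 3$, Lemma~\ref{lemmaalpha} (the same machinery used for the moment computation, applied with $G=\mathbb{F}_p\times\mathbb{F}_p$) bounds this by an absolute constant $\gamma$. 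Paley--Zygmund then gives $\mathbb{P}(X_i\ge1)\ge\mathbb{P}(Y_i\ge1)\ge(\mathbb{E}Y_i)^2/\gamma\gtrsim(\varepsilon/\lC)^4$ per block --- note the exponent $4$, not the $2$ you were aiming for --- and independence over blocks finishes the proof. Your surrounding structure (the exact formula $\mathbb{P}(\overline{\mathbf{B}}_nv=0)=p^{-R(S)}$, the first moment, the independence of the blocks, and the deduction of \eqref{heavytail} from disjoint supports) is correct and matches the paper; the proof stands or falls on the second moment, which as proposed falls.
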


Note that \eqref{heavytail} excludes the possibility of a Cohen-Lenstra limiting distribution for $\cok(\mathbf{B}_n)$. Thus, the second part of Theorem~\ref{thmmain} follows.

When $w_n$ grows fast enough, we do not see localized vectors in $\ker \overline{\mathbf{B}}_n$. In fact, $\ker \overline{\mathbf{B}}_n$ only consists of typical vectors in the following sense.

\begin{theorem}\label{thmdeloc}
Assume that $\lim_{n\to\infty} w_n-\log_p(n)=+\infty$. Let $\mathcal{E}_n$ be a deterministic subset of  $\mathbb{F}_p^n\setminus\{0\}$ such that $\lim_{n\to\infty} \frac{|\mathcal{E}_n|}{p^n}=0$. Then
\[\lim_{n\to\infty}\mathbb{P}(\ker \overline{\mathbf{B}}_n\cap \mathcal{E}_n\neq\emptyset)=0.\]
\end{theorem}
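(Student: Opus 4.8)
The plan is to apply the first moment method directly to $\overline{\mathbf{B}}_n$. For a nonzero $v\in\mathbb{F}_p^n$, write $T(v)=\{i:|i-j|\le w_n\text{ for some }j\in\supp v\}\subseteq\{1,\dots,n\}$ for the $w_n$-neighbourhood of $\supp v$, and set $N(v)=|T(v)|$. The first step is the exact identity $\mathbb{P}(\overline{\mathbf{B}}_n v=0)=p^{-N(v)}$: the entries of row $i$ that are allowed to be nonzero are i.i.d.\ uniform on $\mathbb{F}_p$, the $i$-th coordinate of $\overline{\mathbf{B}}_n v$ is their linear combination with coefficients read off from $v$, hence it is identically $0$ when $i\notin T(v)$ and — conditioning on all but one entry with a nonzero coefficient — uniform on $\mathbb{F}_p$ when $i\in T(v)$; since the rows are independent the probabilities multiply. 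By Markov's inequality, $\mathbb{P}(\ker\overline{\mathbf{B}}_n\cap\mathcal{E}_n\neq\emptyset)\le\sum_{v\in\mathcal{E}_n}p^{-N(v)}$, so it suffices to show $\sum_{v\in\mathcal{E}_n}p^{-N(v)}\to 0$.

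I would then split this sum according to whether $N(v)=n$ or $N(v)<n$. For the vectors with $N(v)=n$ there is no bound better than $p^{-n}$ per vector — there are on the order of $p^n$ of them — so here one must use the hypothesis on $\mathcal{E}_n$ and nothing else: $\sum_{v\in\mathcal{E}_n,\,N(v)=n}p^{-n}\le|\mathcal{E}_n|/p^n\to 0$. This is the only place the assumption on $\mathcal{E}_n$ is used. The remaining sum $\sum_{v\in\mathbb{F}_p^n\setminus\{0\},\,N(v)<n}p^{-N(v)}$ no longer involves $\mathcal{E}_n$ at all; the claim is that it is already small, and this is exactly where $w_n-\log_p(n)\to+\infty$ enters.

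To bound it, note that each $j\in\supp v$ contributes to $T(v)$ an interval of length at least $\min(w_n+1,n)$, so when $N(v)<n$ the set $T(v)$ is a disjoint union of $a\ge 0$ intervals meeting neither $1$ nor $n$ (``interior'' intervals) and $b\le 2$ intervals meeting exactly one of them (``boundary'' intervals). Requiring $T(v)$ to be \emph{exactly} the neighbourhood of $\supp v$ forces the two extreme support points inside an interior interval $[\alpha,\beta]$ to be $\alpha+w_n$ and $\beta-w_n$, and the rightmost support point inside a boundary interval $[1,\gamma]$ to be $\gamma-w_n$ — otherwise a point outside $T(v)$ would be covered, or an endpoint of a component would not be reached. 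Counting the remaining free support positions together with their nonzero values, one gets $\#\{v:T(v)=S\}\le(p-1)^{2a+b}\,p^{\,|S|-2w_na-w_nb}$ for every such interval-union $S$. There are $O(n^{2a+b})$ interval-unions of a given type $(a,b)$, and crucially the factor $p^{|S|}$ cancels $p^{-N(v)}=p^{-|S|}$; summing over types,
\[\sum_{\substack{v\in\mathbb{F}_p^n\setminus\{0\}\\ N(v)<n}}p^{-N(v)}\ \le\ C\!\!\sum_{\substack{a\ge0,\ 0\le b\le 2\\ (a,b)\neq(0,0)}}\!\!\big(n^2p^{-2w_n}\big)^a\big(np^{-w_n}\big)^b\ =\ O\!\left(np^{-w_n}+n^2p^{-2w_n}\right).\]
Since $np^{-w_n}=p^{\log_p(n)-w_n}$, the hypothesis makes this tend to $0$; observe also that if $w_n-\log_p(n)$ stayed bounded the geometric series would only be $O(1)$, which matches Theorem~\ref{thmloc}. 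Combined with the previous paragraph this gives $\sum_{v\in\mathcal{E}_n}p^{-N(v)}\to 0$.

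The one point where care is needed — the main obstacle — is keeping the boundary components separate from the interior ones in the last step. An interior interval ``wastes'' a margin of width $w_n$ on \emph{both} sides, so it carries a factor $p^{-2w_n}$, which comfortably absorbs the $\lesssim n^2$ choices for its two endpoints; a boundary interval wastes a margin on only one side and carries merely $p^{-w_n}$, but it has only a single free length ($\lesssim n$ choices) to absorb and there are at most two of them, so $np^{-w_n}\to 0$ still suffices. Treating interior intervals as though they saved only $p^{-w_n}$ would leave a surviving factor $\approx n^2p^{-w_n}\to\infty$ per interior interval and destroy the bound when $w_n\approx\log_p(n)$, so the bookkeeping must distinguish the two kinds of component. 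Everything else is a routine first-moment and geometric-series estimate, and the argument is self-contained; alternatively one could extract the statement from the boundedness of the moments of $p^{\dim\ker\overline{\mathbf{B}}_n}$ that comes out of the proof of Theorem~\ref{thmmain}(a), but the direct route above seems cleaner.
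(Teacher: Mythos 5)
Your proof is correct. At the top level it is the same first-moment argument as the paper's: your exact identity $\mathbb{P}(\overline{\mathbf{B}}_n v=0)=p^{-N(v)}$ is \eqref{Pker} specialized to $G=\mathbb{F}_p$ (where $N(v)$ counts the indices $k$ with $\mathbf{H}_{\mathbf{g}}(k)=\mathbb{F}_p$), and your split into $N(v)=n$ versus $N(v)<n$ is exactly the paper's split of $\mathcal{E}_n$ into $\mathcal{E}_n'$ and its complement, with the $N(v)=n$ part bounded by $|\mathcal{E}_n|p^{-n}$ in both cases. Where you genuinely differ is in how the atypical sum $\sum_{N(v)<n}p^{-N(v)}$ is killed: the paper invokes \eqref{onlytypical}, which it extracts from the general moment machinery of Lemma~\ref{lemmaalpha} (the sets $W$, the margins $L_j,R_j$, and the case analysis according to whether components touch the endpoints of $[c,d]$ are the general-$G$ versions of your interior/boundary intervals), whereas you re-derive the $\mathbb{F}_p$ case from scratch by directly counting $\{v:T(v)=S\}$ for each candidate union of intervals $S$. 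Your version is self-contained and more elementary --- $\mathbb{F}_p$ has only two subgroups, so no induction over $\Sg(G)$ is needed and the explicit bound $O(np^{-w_n}+n^2p^{-2w_n})$ falls out --- while the paper's route costs nothing extra since Lemma~\ref{lemmaalpha} is needed anyway for Theorem~\ref{thmmoment}. Your bookkeeping point that an interior component carries $p^{-2w_n}$ against $n^2$ endpoint choices while a boundary component carries only $p^{-w_n}$ against $n$ choices is precisely the distinction the paper draws between the estimates \eqref{sump1} and \eqref{sump2}--\eqref{sump3}.
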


We prove two further phase transitions.

\begin{theorem}\label{thmtight}
The sequence of random variables $\dim\ker \overline{\mathbf{B}}_n$ is tight if and only if \[\liminf_{n\to\infty} \left(w_n-\log_p(n)\right)>-\infty.\]
\end{theorem}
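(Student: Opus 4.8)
The plan is to prove the two directions separately, drawing on the results already stated. For the "if" direction, suppose $\liminf_{n\to\infty}(w_n-\log_p(n))>-\infty$, say $w_n\ge \log_p(n)-C$ for all $n$. I would split into two cases according to whether $w_n-\log_p(n)$ tends to $+\infty$ along a subsequence or not. The clean approach is to observe that tightness of $\dim\ker\overline{\mathbf{B}}_n$ is equivalent to: for every $\varepsilon>0$ there is a $K$ with $\mathbb{P}(\dim\ker\overline{\mathbf{B}}_n\ge K)<\varepsilon$ for all $n$. When $\lim(w_n-\log_p(n))=+\infty$, tightness is immediate from Theorem~\ref{thmrank} (the limiting distribution is a genuine probability distribution on nonnegative integers, so the tails vanish uniformly in the limit, and finitely many initial terms are handled trivially). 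The remaining case is $\liminf(w_n-\log_p(n))=c$ finite; here I would want a uniform-in-$n$ tail bound $\mathbb{P}(\dim\ker\overline{\mathbf{B}}_n\ge K)\le f(K,c)$ with $f(K,c)\to 0$ as $K\to\infty$, which should come from a first-moment / union bound argument: control $\mathbb{E}\,p^{\dim\ker\overline{\mathbf{B}}_n}=\mathbb{E}\,\#\{v\in\mathbb{F}_p^n: \overline{\mathbf{B}}_n v=0\}=\sum_{v\neq 0}\mathbb{P}(\overline{\mathbf{B}}_n v=0)+1$, and bound $\mathbb{P}(\overline{\mathbf{B}}_n v=0)$ for each $v$ in terms of the number of rows of $\overline{\mathbf{B}}_n$ whose support meets $\supp v$; since every column index lies in the band of at least roughly $w_n\ge \log_p(n)-C$ distinct rows, each nonzero $v$ forces at least that many independent scalar constraints, giving $\mathbb{P}(\overline{\mathbf{B}}_n v=0)\le p^{-(w_n-O(1))}\le p^{C'}/n$, and summing a slightly more careful version of this over the $p^n-1$ nonzero $v$ (grouping by $|\supp v|$ and using that small-support vectors see proportionally many rows) yields a bounded value of $\mathbb{E}\,p^{\dim\ker\overline{\mathbf{B}}_n}$, hence uniform tightness by Markov.

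For the "only if" direction, suppose $\liminf_{n\to\infty}(w_n-\log_p(n))=-\infty$. Then along a subsequence $w_n-\log_p(n)\to-\infty$, and in particular $w_n-\log_p(n)$ does not converge to $+\infty$, so Theorem~\ref{thmloc} applies: there is a $\kappa>0$ such that for every $\lC$ there are infinitely many $n$ with $\mathbb{P}(\dim\ker\overline{\mathbf{B}}_n\ge \lC)\ge(\kappa/\lC)^{4\lC}$. Since the right-hand side is strictly positive, this shows that for every $\lC$ there exist arbitrarily large $n$ with $\mathbb{P}(\dim\ker\overline{\mathbf{B}}_n\ge\lC)$ bounded below by a positive constant (depending on $\lC$ but not on $n$ within that infinite set). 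Hence no uniform tail bound can hold: for any proposed $K$, taking $\lC=K$ produces infinitely many $n$ with $\mathbb{P}(\dim\ker\overline{\mathbf{B}}_n\ge K)\ge(\kappa/K)^{4K}>0$, so the sequence is not tight. This direction is essentially a direct invocation of Theorem~\ref{thmloc}, so it is short.

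The main obstacle is the finite-$\liminf$ case of the "if" direction, where neither Theorem~\ref{thmrank} nor Theorem~\ref{thmdeloc} directly applies (they both assume $w_n-\log_p(n)\to+\infty$), and one genuinely needs a self-contained uniform first-moment estimate. The delicate point there is handling vectors $v$ of very small support: a vector supported on a single coordinate $i$ still imposes $\sim 2w_n$ constraints (all rows whose band contains $i$), so $\mathbb{P}(\overline{\mathbf{B}}_n v=0)\le p^{-(2w_n - O(1))}$ is quite small, but there are only $\sim (p-1)n$ such vectors; vectors of larger support impose at least $\sim w_n$ constraints each but are more numerous — the bookkeeping that shows $\sum_{v\ne 0}\mathbb{P}(\overline{\mathbf{B}}_n v=0)$ stays bounded, uniformly over all $n$ with $w_n\ge\log_p(n)-C$, is the technical heart. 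I would handle it by partitioning the nonzero vectors according to the diameter of their support within the index set $[n]$ and counting, for each diameter range, both the number of such vectors and a lower bound on the number of rows constraining them, so that the exponential count is beaten by the exponential constraint gain; a $C$-dependent geometric-series bound then closes the argument.
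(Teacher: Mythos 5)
Your ``if'' direction is essentially the paper's argument: one bounds $\mathbb{E}\,p^{\dim\ker \overline{\mathbf{B}}_n}=\mathbb{E}|\ker\overline{\mathbf{B}}_n|=\sum_v\mathbb{P}(\overline{\mathbf{B}}_nv=0)$ uniformly in $n$ under $\sup_n np^{-w_n}<\infty$ and applies Markov. The paper gets this uniform first moment for free from Lemma~\ref{lemmaalpha} applied with $G=\mathbb{F}_p$ (via Lemma~\ref{lemmamoment}), so the support-versus-constraint bookkeeping you describe is already done there; your case split according to whether $w_n-\log_p(n)\to+\infty$ is unnecessary once you have the uniform moment bound.

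The ``only if'' direction, however, has a genuine gap. Theorem~\ref{thmloc} gives, for each $K$, infinitely many $n$ with $\mathbb{P}(\dim\ker\overline{\mathbf{B}}_n\ge K)\ge(\kappa/K)^{4K}$, but this lower bound tends to $0$ as $K\to\infty$, and that is \emph{not} a contradiction with tightness. Failure of tightness requires a single $\varepsilon>0$ such that for every $K$ some $n$ satisfies $\mathbb{P}(\dim\ker\overline{\mathbf{B}}_n\ge K)\ge\varepsilon$; a sequence with $\mathbb{P}(\dim\ker\overline{\mathbf{B}}_n\ge K)\le\sup_{L\ge K}(\kappa/L)^{4L}\to 0$ would be perfectly tight while satisfying \eqref{heavytail}. (Indeed \eqref{heavytail} only rules out the Cohen--Lenstra limit, whose tails decay like $p^{-K^2}$, much faster than $(\kappa/K)^{4K}$.) To close the gap you must exploit the stronger hypothesis $\liminf(w_n-\log_p(n))=-\infty$: along a subsequence $np^{-w_n}\to\infty$, so in Lemma~\ref{loclemma} one may take $\varepsilon=\lC$, whence \emph{each} of the $\lC$ independent blocks produces a kernel vector with probability at least $C^4$, a constant independent of $\lC$. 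Hoeffding's inequality applied to the $\lC$ independent block indicators then gives $\mathbb{P}\bigl(\dim\ker\overline{\mathbf{B}}_n\ge \tfrac12 C^4\lC\bigr)\ge 1-\exp\bigl(-\tfrac12 C^8\lC\bigr)$ for infinitely many $n$, and this probability stays bounded away from $0$ (in fact tends to $1$) as $\lC\to\infty$, which genuinely contradicts tightness. This concentration step, not the all-blocks-succeed event of Theorem~\ref{thmloc}, is the essential missing idea.
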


 It is also very natural to normalize  $\dim\ker \overline{\mathbf{B}}_n$ by $n$. The next theorem gives a phase transition for this normalized dimension.

\begin{theorem}\label{thmnormalized}
 The sequence of random variables \[\frac{\dim\ker \overline{\mathbf{B}}_n}n\]
 converge to $0$ in probability if and only if $\lim_{n\to\infty} w_n=\infty.$
 \end{theorem}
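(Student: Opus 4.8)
The two implications are proved separately.

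\emph{The ``only if'' part.} Suppose $w_n$ does not tend to $\infty$, so there is a constant $C$ and an infinite set $S$ of indices with $w_n\le C$ for $n\in S$. For $n\in S$, every column of $\overline{\mathbf{B}}_n$ has at most $2C+1$ possibly nonzero entries, so column $j$ is the zero vector with probability at least $p^{-(2C+1)}$; these events involve pairwise disjoint entries of $\overline{\mathbf{B}}_n$ and are therefore independent over $j$. Since a zero column $j$ yields $e_j\in\ker\overline{\mathbf{B}}_n$, and vectors $e_j$ for distinct zero columns are independent, $\dim\ker\overline{\mathbf{B}}_n\ge\#\{\text{zero columns}\}$, which by a Chernoff bound is at least $\tfrac12 p^{-(2C+1)}n$ with probability tending to $1$ along $S$. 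Hence $\dim\ker\overline{\mathbf{B}}_n/n$ does not converge to $0$ in probability.

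\emph{The ``if'' part: reduction and block decomposition.} Assume $\lim_n w_n=\infty$; it suffices to prove $\mathbb{E}\!\left[\dim\ker\overline{\mathbf{B}}_n\right]=o(n)$, since then Markov's inequality gives $\mathbb{P}(\dim\ker\overline{\mathbf{B}}_n>\varepsilon n)\to0$ for all $\varepsilon>0$. I first dispose of the indices $n$ with $w_n>\tfrac32\log_p(n)$: for these, $w_n-\log_p(n)>\tfrac12\log_p(n)$, and comparing with an auxiliary bandwidth sequence $\tilde w_n$ that equals $w_n$ on this set and $\lceil2\log_p n\rceil$ elsewhere (so $\tilde w_n-\log_p n\to\infty$), Theorem~\ref{thmmain}(a) together with Theorem~\ref{thmrank} gives tightness of $\dim\ker\overline{\mathbf{B}}_n$ along these indices, hence $\mathbb{P}(\dim\ker\overline{\mathbf{B}}_n>\varepsilon n)\to0$ there. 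So assume now $w_n\le\tfrac32\log_p(n)$, and set $m=m_n=\big\lceil p^{\lceil w_n/2\rceil}\big\rceil$. Then, as $n\to\infty$ through these indices, $m_n\to\infty$, $m_n\ge w_n$, $m_n=o(n)$, and $w_n-\log_p(m_n)\to\infty$. Partition $[n]$ into consecutive blocks $R_1,\dots,R_t$ of length $m$ (the last possibly shorter), $t=\lceil n/m\rceil$, and write $\overline{\mathbf{B}}_n=(M_{ab})_{a,b}$ in this block form. Since $m\ge w_n$, the matrix is block tridiagonal: $M_{ab}=0$ for $|a-b|\ge2$.

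\emph{A deterministic rank inequality, and conclusion.} Let $D_a=M_{aa}$, let $U$ be $\overline{\mathbf{B}}_n$ with the subdiagonal blocks $M_{a+1,a}$ replaced by $0$ (so $U$ is block upper triangular with diagonal blocks $D_a$), and let $L=\overline{\mathbf{B}}_n-U$. A block upper triangular matrix has rank at least the sum of the ranks of its diagonal blocks, so $\operatorname{rank}(\overline{\mathbf{B}}_n)\ge\operatorname{rank}(U)-\operatorname{rank}(L)\ge\sum_a\operatorname{rank}(D_a)-\operatorname{rank}(L)$. The band condition forces each $M_{a+1,a}$ to be supported in a $w_n\times w_n$ corner, so $\operatorname{rank}(L)\le(t-1)w_n$; together with $\sum_a|R_a|=n$ this gives the deterministic bound $\dim\ker\overline{\mathbf{B}}_n\le\sum_{a=1}^{t}\dim\ker D_a+(t-1)w_n$. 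Now $D_1,\dots,D_{t-1}$ are i.i.d.\ copies of $\overline{\mathbf{B}}_{m}$ with bandwidth $w_n$, and by the choice of $m_n$ they satisfy the hypothesis of Theorem~\ref{thmmain}(a); the surjection–moment bounds underlying that theorem (equivalently, the limiting law of Theorem~\ref{thmrank}) give, for $n$ large, $\mathbb{P}(\dim\ker D_a\ge k)\le\mathbb{E}\big[\#\Sur(\cok(D_a^{\mathrm{lift}}),(\mathbb{Z}/p\mathbb{Z})^k)\big]/|\!\operatorname{GL}_k(\mathbb{F}_p)|\le 2\,p^{-k^2}/\!\prod_{j\ge1}(1-p^{-j})$, whence $\mathbb{E}[\dim\ker D_a]\le C$ for a constant $C=C(p)$. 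Bounding $\dim\ker D_t\le m_n$, we obtain $\mathbb{E}[\dim\ker\overline{\mathbf{B}}_n]\le(t-1)(C+w_n)+m_n\le n(C+w_n)p^{-w_n/2}+m_n$, and dividing by $n$ both terms tend to $0$ because $w_n\to\infty$ and $m_n=o(n)$. This completes the proof.

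\emph{The main obstacle.} The delicate point is producing a bound that is genuinely $o(n)$ rather than $O(n/p)$: choosing columns with pairwise disjoint supports, or keeping only a triangular sub-band, always stalls at a linear lower bound of order $n/p$ — which is in fact the correct order of $\dim\ker\overline{\mathbf{B}}_n$ when $w_n$ stays bounded. The block decomposition escapes this because the diagonal blocks can be taken large enough ($m_n\gg w_n$) for the off-diagonal corners to cost only $O(tw_n)=o(n)$, yet small enough ($m_n\ll p^{w_n}$) to lie in the Cohen--Lenstra regime of Theorem~\ref{thmmain}(a); this window of admissible block sizes is nonempty precisely because $w_n\to\infty$.
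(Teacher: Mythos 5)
Your proof is correct. The ``only if'' direction is essentially identical to the paper's: both count all-zero columns, use their independence, and apply a concentration inequality to get $\dim\ker\overline{\mathbf{B}}_n\gtrsim p^{-2w-1}n$ with high probability along a subsequence of bounded band width.

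Your ``if'' direction, however, takes a genuinely different route. The paper also partitions $[1,n]$ into consecutive blocks, but of length $n_j\le w_n$, so that each diagonal block is a \emph{fully uniform} $n_j\times n_j$ matrix; conditioning on the off-diagonal part, a vector $v$ with $v_{I_j}\neq 0$ satisfies the $j$-th block equation with probability exactly $p^{-n_j}$, which yields the clean first-moment bound $\mathbb{E}|\ker\overline{\mathbf{B}}_n|\le 2^{n/w_n+1}$ and finishes by Markov applied to $|\ker\overline{\mathbf{B}}_n|\ge p^{\varepsilon n}$. You instead take \emph{large} blocks of length $m_n\approx p^{w_n/2}$, sitting in the window $w_n\ll m_n\ll p^{w_n}$, control the subdiagonal corners by a deterministic rank inequality costing $O(tw_n)=o(n)$, and bound $\mathbb{E}[\dim\ker D_a]$ for each diagonal block by a constant using the Cohen--Lenstra-regime moment machinery. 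This is correct but heavier: it needs the case split at $w_n\approx\tfrac32\log_p n$ and imports Lemma~\ref{lemmaalpha}, whereas the paper's choice of block size avoids all of that. One small wrinkle in your constant-expectation step: the bound $\mathbb{E}|\Sur(\cdot,\mathbb{F}_p^k)|\le 2$ holds ``for $n$ large depending on $k$,'' so summing it over all $k$ at fixed $n$ is not literally justified; the cleanest patch is $\mathbb{E}[\dim\ker D_a]\le\mathbb{E}|\ker D_a|=\mathbb{E}|\Hom(\cok(D_a),\mathbb{F}_p)|\le 1+(q+4)^2\exp(q^2)$ with $q=2m_np^{-w_n}$ bounded, exactly as in the tightness argument of Section~\ref{sectight}. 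Finally, your closing remark that first-moment/column arguments ``stall at order $n/p$'' is not accurate as a description of the paper's method, which is itself a first-moment argument on $|\ker\overline{\mathbf{B}}_n|$ and succeeds because the threshold $p^{\varepsilon n}$ is exponentially large.
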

\subsection{Comparison with Gaussian band matrices }\label{secGaussian}
An $n\times n$ Gaussian band matrix with band width $w$ is a random matrix of the form,
\[X_{n,w}=\frac{1}{\sqrt{w}}\begin{pmatrix}
d_1&a_{1,2}&a_{1,3}&\cdots&\cdots& a_{1,w}&0&\cdots&\cdots&0\\
\overline{a_{1,2}}&d_2&a_{2,3}&a_{2,4}&\cdots&\cdots&a_{2,w+1}&0&\cdots& 0\\
\overline{a_{1,3}}&\overline{a_{2,3}}&\ddots&\ddots& & & & & & \vdots\\
\vdots&\overline{a_{2,4}}&\ddots& \ddots& \ddots& & & & & \vdots\\
\vdots&\vdots& &\ddots&\ddots&\ddots& & &  &\vdots\\
\overline{a_{1,w}}&\vdots& & &\ddots&\ddots& \ddots& &  &\vdots\\
0&\overline{a_{2,w+1}}& & & &\ddots &\ddots &\ddots &&\vdots\\
\vdots&0& & & & &\ddots &\ddots &\ddots &\vdots\\
\vdots&\vdots& & & &&  &\ddots &\ddots &\vdots\\
0&0& \cdots&\cdots &\cdots &\cdots &\cdots &\cdots &\cdots&\\

\end{pmatrix},\]
where $(d_i)_{1\le i\le n}$ and $(a_{ij})_{1\le i\le n,\,\,  i<j\le\min(i+w-1,n)}$ are independent standard real and complex Gaussian variables, respectively.

It is conjectured that band matrices exhibit a phase transition \cite{bourgade2018random}:
\begin{itemize}
    \item If $w\ll \sqrt{n}$, then the eigenvectors of $X_{n,w}$ are localized, and the local eigenvalue statistics converge to a Poisson process. 
    \item If $w\gg \sqrt{n}$, then the eigenvectors of $X_{n,w}$ are delocalized, and the local eigenvalue statistics converge to the $\text{Sine}_2$ process. 
\end{itemize}

The localization of eigenvectors was proved for $w\ll n^{1/8}$ in \cite{schenker2009eigenvector}. Later the exponent $1/8$ was improved to ${1/7}$ and then to ${1/4}$ \cite{peled2019wegner,cipolloni2024dynamical,chen2022random}. For fixed $w$, Poisson eigenvalue statistics were proved in \cite{brodie2022density}, see also  \cite{hislop2022local} for further progress towards Poisson statistics.

Improving earlier results \cite{band1,band2,band3,band4,band5}, the delocalization of eigenvectors and the convergence to the $\text{Sine}_2$ process were proved in \cite{bb1,bb2,bb3} for $w\gg n^{3/4}$.

Recent results \cite{van2023local,van2023p,recent5,van2021limits,neretin2013hua,bufetov2017ergodic,assiotis2022infinite} seem to indicate that there is some analogy between the spectrum of classical random matrix ensembles and the cokernels of random matrices over the $p$-adic integers. Theorem~\ref{thmmain} gives a $p$-adic analogue of the Poisson/$\text{Sine}_2$ phase transition. Theorems~\ref{thmloc}~and~\ref{thmdeloc} give a $p$-adic analogue of the localization/delocalization phase transition.

\medskip

\textbf{The structure of the paper:} In Section~\ref{sec2}, we discuss the moment problem for abelian $p$-groups, and explain that in order to prove the first part of Theorem~\ref{thmmain}, it is enough to show that all the surjective moments of $\cok(\mathbf{B}_n)$ converge to $1$. We also show that the first part of Theorem~\ref{thmmain} implies Theorem~\ref{thmrank}, and that Theorem~\ref{thmloc} implies the second part of Theorem~\ref{thmmain}. In Section~\ref{secmomentcalc}, we show that assuming that $\lim_{n\to\infty} w_n-\log_p(n)=+\infty$ all the surjective moments of $\cok(\mathbf{B}_n)$ converge to $1$, which completes the proof of the first part of Theorem~\ref{thmmain}. We finish Section~\ref{secmomentcalc} by the proof of Theorem~\ref{thmdeloc}. Theorems~\ref{thmloc},~\ref{thmtight} and~\ref{thmnormalized} are proved in Sections~\ref{secloc},~\ref{sectight} and~\ref{secnorm}, respectively. In Section~\ref{secopen}, we discuss several open problems. 

\medskip

\textbf{Acknowledgement:} 
The author was supported by the KKP 139502 project.

\section{From convergence of moments to converge of distributions}\label{sec2}

Given two finite abelian groups $G$ and $H$, let $\Hom(G,H)$ and $\Sur(G,H)$ be the set of homomorphisms and surjective homomorphisms from $G$ to $H$, respectively. The next theorem was first proved in \cite{ellenberg2016homological}, and then in greater generality in \cite{wood2017distribution}.

\begin{theorem}\label{thmWood}
    Let $\Gamma_n$ be a sequence of random finite abelian $p$-groups such that for any deterministic finite abelian $p$-group $G$, we have
    \[\lim_{n\to\infty} \mathbb{E}|\Sur(\Gamma_n,G)|=1.\]
    Then for any finite abelian $p$-group $G$, we have
    \[\lim_{n\to\infty} \mathbb{P}(\Gamma_n\cong G)=\frac{1}{|\Aut(G)|}\prod_{j=1}^{\infty}\left(1-p^{-j}\right).\]

\end{theorem}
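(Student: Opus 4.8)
The plan is to deduce convergence in distribution from convergence of moments by a compactness-plus-uniqueness argument, where the uniqueness rests on an explicit ``inversion'' that writes each group probability as a rapidly convergent series in the surjective moments. First I would record the two classical facts about the candidate measure $\mu(G)=|\Aut(G)|^{-1}\prod_{j\ge 1}(1-p^{-j})$: that $\sum_G |\Aut(G)|^{-1}=\prod_{j\ge 1}(1-p^{-j})^{-1}$, so $\mu$ is a probability measure on isomorphism classes of finite abelian $p$-groups; and that $\sum_G \mu(G)\,|\Sur(G,H)|=1$ for every $H$, i.e.\ $\mu$ has all surjective moments equal to $1$. Thus $\mu$ is the only possible limit, and the theorem becomes an assertion of moment-determinacy for $\mu$ that is moreover stable under the hypothesis $\mathbb{E}|\Sur(\Gamma_n,G)|\to 1$.

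Next, since $\mathbb{P}(\Gamma_n\cong G)\in[0,1]$ and there are only countably many isomorphism classes $G$, a diagonal argument lets me pass to a subsequence along which $\mathbb{P}(\Gamma_n\cong G)\to c_G$ for every $G$, with $c_G\ge 0$ and $\sum_G c_G\le 1$. It then suffices to show that every such subsequential limit satisfies $c_G=\mu(G)$ for all $G$; in particular $\sum_G c_G=1$, i.e.\ there is no escape of mass.

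The engine for this is a combinatorial inversion. Möbius inversion of the size-triangular system $|\Hom(A,H)|=\sum_{H'\le H}|\Sur(A,H')|$, together with the classical closed form for alternating sums of $q$-binomial coefficients, produces isomorphism-invariant coefficients $b_{G,H}$, nonzero only for $H$ admitting $G$ as a quotient, with Gaussian-type decay (for fixed $G$, $|b_{G,H}|$ is bounded by a constant depending on $G$ times $p^{-c\,r(H)^2}$, where $r(H)$ is the rank), such that $\mathbbm{1}[A\cong G]=\sum_{H} b_{G,H}\,|\Sur(A,H)|$ for every finite abelian $p$-group $A$. Integrating this identity against $\mu$ and using that $\mu$ has all moments $1$ gives $\sum_H b_{G,H}=\mu(G)$. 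Taking $A=\Gamma_n$ and expectations should give
\[
\mathbb{P}(\Gamma_n\cong G)=\sum_{H} b_{G,H}\,\mathbb{E}|\Sur(\Gamma_n,H)|\;\xrightarrow[n\to\infty]{}\;\sum_H b_{G,H}\cdot 1=\mu(G),
\]
where the first equality needs a bound on $\mathbb{E}|\Sur(\Gamma_n,H)|$, uniform in $H$ for each fixed $n$, making the series converge, and the limit step needs the tail $\sum_{r(H)\ge m}|b_{G,H}|\,\mathbb{E}|\Sur(\Gamma_n,H)|$ to be small uniformly in $n$. Since $|\Sur(A,H)|\le|\Hom(A,H)|\le|H|^{r(A)}$ and, on the support of $b_{G,\cdot}$, $|H|$ grows only exponentially in $r(H)$, the Gaussian decay of $b_{G,H}$ leaves a lot of room, and the required control on moments can be reduced to controlling the ``rank moments'' $\mathbb{E}|\Sur(\Gamma_n,(\mathbb{Z}/p^k)^r)|$.

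I expect the genuinely hard point to be exactly this uniformity — equivalently, ruling out escape of mass. The difficulty is structural: no finite collection of surjective moments detects large groups (for instance $|\Sur(\,\cdot\,,H)|$ is bounded uniformly over the groups $\mathbb{Z}/p^k$, $k\ge 1$, for every fixed $H$), so tightness of the laws of $\Gamma_n$ cannot be read off from finitely many moments and must be extracted from infinitely many of them acting together, through the sign cancellation built into the coefficients $b_{G,H}$. Making that cancellation quantitative — proving $\sum_{|H|\ge p^m}\mathbb{P}(\Gamma_n\cong H)\,|\Sur(H,G)|\le\varepsilon_m$ with $\varepsilon_m\to 0$ independently of $n$ — is the technical heart of the matter; once it is in place both interchanges above are justified and the conclusion follows. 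This is precisely the step where the argument of \cite{wood2017distribution} improves on \cite{ellenberg2016homological}, which assumed an a priori bound on the moments.
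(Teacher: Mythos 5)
The paper does not actually prove Theorem~\ref{thmWood}: it is quoted as a known result, with the proof delegated to \cite{ellenberg2016homological} and \cite{wood2017distribution}, so there is no internal argument to compare yours against. Judged on its own terms, your outline correctly reproduces the architecture of the standard proof: the Cohen--Lenstra measure is the only possible limit because all of its surjective moments equal $1$; subsequential limits $c_G$ exist by diagonalization with $\sum_G c_G\le 1$; and identifying $c_G$ with $\mu(G)$ reduces to a moment-uniqueness statement implemented through an inversion formula whose coefficients $b_{G,H}$ are supported on groups $H$ surjecting onto $G$ and decay like $p^{-c\,r(H)^2}$.

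As a proof, however, it is incomplete, and you say so yourself: the step you call ``the technical heart of the matter'' --- the uniform-in-$n$ tail estimate $\sum_{r(H)\ge m}|b_{G,H}|\,\mathbb{E}|\Sur(\Gamma_n,H)|\le\varepsilon_m$ with $\varepsilon_m\to 0$, equivalently the exclusion of escape of mass --- is asserted as needed but never established, and it is precisely where the entire content of the theorem resides. Without it, neither the termwise identity $\mathbb{P}(\Gamma_n\cong G)=\sum_H b_{G,H}\,\mathbb{E}|\Sur(\Gamma_n,H)|$ (which for fixed $n$ already requires an absolute summability that the hypothesis, being only an assertion about limits as $n\to\infty$, does not obviously supply) nor the interchange of the sum over $H$ with the limit in $n$ is justified. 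Two further points would need attention in a complete write-up: the existence of the inversion identity with the claimed Gaussian decay of $|b_{G,H}|$ is itself a nontrivial lemma resting on alternating sums of subgroup counts; and the argument of \cite{wood2017distribution} is organized somewhat differently --- one first shows via Fatou that any subsequential limit measure has surjective moments at most $1$, and then invokes a uniqueness theorem for measures whose moments obey a growth bound, rather than manipulating the series for $\Gamma_n$ directly. Your route is viable, but the quantitative cancellation you defer is the whole difficulty, so the proposal should be regarded as a correct plan rather than a proof.
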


The expectation $\mathbb{E}|\Sur(\Gamma_n,G)|$ is called the surjective $G$-moment of $\Gamma_n$. Our next lemma gives an expression for $\mathbb{E}|\Sur(\Gamma_n,G)|$, when $\Gamma_n=\cok(M_n)$ for some $n\times n$ random matrix $M_n$ over~$\mathbb{Z}_p$. Let $G$ be a finite abelian $p$-group. Note that $G$ is a $\mathbb{Z}_p$-module. So for all $\mathbf{g}\in G^n$, we can define $M_n \mathbf{g}\in G^n$ the usual way. The map $\mathbf{g}\mapsto M_n\mathbf{g}$ is a $\mathbb{Z}_p$-module homomorphism from $G^n$ to $G^n$. For the proof of the next lemma, see for example \cite[Proposition 5.2.]{meszaros2020distribution}. 

\begin{lemma}\label{lemma6}
We have
\begin{align*}\mathbb{E}|\Hom(\cok(M_n),G)|&=\mathbb{E}|\{\mathbf{g}\in G^n\,:\,M_n\mathbf{g}=0\}|,\text{ and}\\\mathbb{E}|\Sur(\cok(M_n),G)|&=\mathbb{E}|\{\mathbf{g}\in G^n\,:\,M_n\mathbf{g}=0\text{ and the components of $\mathbf{g}$ generate }G\}|.
\end{align*}
\end{lemma}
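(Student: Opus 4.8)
The plan is to establish both identities realization-by-realization for a fixed matrix $M\in M_n(\mathbb{Z}_p)$ and then take expectations over $M=M_n$; this last step is legitimate because $G^n$ is finite, so every count appearing in the statement is a finite nonnegative integer and the expectations are well defined.

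First I would recall that $\mathbb{Z}_p^n$ is free on the standard basis $e_1,\dots,e_n$, so evaluation at the basis identifies $\Hom(\mathbb{Z}_p^n,G)$ with $G^n$ via $\psi\mapsto(\psi(e_1),\dots,\psi(e_n))$. Next, a homomorphism $\psi:\mathbb{Z}_p^n\to G$ descends to the quotient $\cok(M)=\mathbb{Z}_p^n/\RowS(M)$ exactly when it annihilates every row of $M$; writing the $i$-th row as $\sum_j M(i,j)e_j$, this condition reads $\sum_j M(i,j)\psi(e_j)=0$ in $G$ for all $i$, which is precisely the equation $M\mathbf{g}=0$ in $G^n$ for $\mathbf{g}=(\psi(e_1),\dots,\psi(e_n))$ --- this makes sense because $G$ is a $\mathbb{Z}_p$-module and $M$ has entries in $\mathbb{Z}_p$. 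Finally, when $\psi$ does descend, the induced map $\cok(M)\to G$ is surjective if and only if $\psi$ is, if and only if the subgroup generated by $\psi(e_1),\dots,\psi(e_n)$ equals $G$, i.e.\ the components of $\mathbf{g}$ generate $G$. Together these give, for each fixed $M$, bijections
\[\Hom(\cok(M),G)\;\longleftrightarrow\;\{\mathbf{g}\in G^n:M\mathbf{g}=0\}\]
and
\[\Sur(\cok(M),G)\;\longleftrightarrow\;\{\mathbf{g}\in G^n:M\mathbf{g}=0\text{ and the components of }\mathbf{g}\text{ generate }G\}.\]
Taking cardinalities of these finite sets and then taking expectations over $M=M_n$ yields the two displayed formulas.

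I do not expect a genuine obstacle: the argument is purely formal, and the only point requiring care is the direction of the quotient (row space versus column space), so that the matrix equation comes out as $M_n\mathbf{g}=0$ with the correct transpose convention rather than $M_n^{\mathsf T}\mathbf{g}=0$. For this reason the lemma can be invoked from \cite{meszaros2020distribution} without reproducing the (short) verification.
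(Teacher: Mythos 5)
Your argument is correct and is exactly the standard verification: the paper itself does not spell out a proof but delegates it to a citation (\cite[Proposition 5.2]{meszaros2020distribution}), and what you wrote is the content of that reference, including the one point that genuinely needs care here, namely that with $\cok(M_n)=\mathbb{Z}_p^n/\RowS(M_n)$ the descent condition for $\psi$ is annihilation of the rows, which is precisely $M_n\mathbf{g}=0$ rather than a transposed equation. No gaps.
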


For two integers $c\le d$, by $[c,d]$ we mean the set $\{c,c+1,\dots,d\}$, and $[c,c-1]$ is defined to be the empty set.  

For an interval $[c,d]$ and a boundary condition 
\[\mathbf{b}:[c-w,c-1]\cup [d+1,d+w]\to G,\] let
\[G^{[c,d]}_\mathbf{b}=\{\mathbf{g}:[c-w,d+w]\to G\,:\,\mathbf{g}(k)=\mathbf{b}(k)\text{ for all }k\in [c-w,c-1]\cup [d+1,d+w]\}.\]

Let $\Sg(G)$ be the set of subgroups of $G$. For any $\mathbf{g}\in G^{[c,d]}_\mathbf{b}$, we define $\mathbf{H}_\mathbf{g}:[c,d]\to\Sg(G)$ by setting
\[\mathbf{H}_\mathbf{g}(k)=\left\langle \mathbf{g}(j)\,:\, k-w\le j\le k+w\right\rangle\]
for all $k\in [c,d]$. 

Let ${0}_n$ be the all zero map defined on $[1-w_n,0]\cup[n+1,n+w_n]$. For $\mathbf{g}\in G^{[1,n]}_{{0}_n}$, let
\[\langle \mathbf{g}\rangle=\langle \mathbf{g}(k)\,:\,k\in [1,n]\rangle.\]

\begin{lemma}\label{lemmamoment}
We have
\begin{align*}\mathbb{E}|\Hom(\cok(\mathbf{B}_n),G)|&=\sum_{\mathbf{g}\in G^{[1,n]}_{{0}_n}} \prod_{k=1}^n |\mathbf{H}
    _{\mathbf{g}}(k)|^{-1}\text{ and }\\\mathbb{E}|\Sur(\cok(\mathbf{B}_n),G)|&=\sum_{\substack{\mathbf{g}\in G^{[1,n]}_{{0}_n}\\\langle\mathbf{g}\rangle=G}} \prod_{k=1}^n |\mathbf{H}
    _{\mathbf{g}}(k)|^{-1}.
    \end{align*}

\end{lemma}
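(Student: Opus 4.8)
The plan is to derive both formulas from Lemma~\ref{lemma6} by a direct probabilistic computation. Lemma~\ref{lemma6} already rewrites $\mathbb{E}|\Hom(\cok(\mathbf{B}_n),G)|=\sum_{\mathbf{g}\in G^n}\mathbb{P}(\mathbf{B}_n\mathbf{g}=0)$, and expresses the surjective moment as the same sum restricted to those $\mathbf{g}$ whose components generate $G$. Identifying $G^n$ with $G^{[1,n]}_{0_n}$ by extending $\mathbf{g}$ by zero on $[1-w_n,0]\cup[n+1,n+w_n]$, it therefore suffices to establish
\[\mathbb{P}(\mathbf{B}_n\mathbf{g}=0)=\prod_{k=1}^n|\mathbf{H}_{\mathbf{g}}(k)|^{-1}\]
for every such $\mathbf{g}$; summing this identity over $\mathbf{g}$, and over $\mathbf{g}$ with $\langle\mathbf{g}\rangle=G$ in the surjective case, then yields the lemma.

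To prove the displayed identity I would first note that $\mathbf{B}_n$, being Haar-uniform on $\mathcal{B}_{n,w_n}$, has independent Haar-uniform entries on the band $\{(i,j):|i-j|\le w_n\}$, since Haar measure on the product of the $\mathbb{Z}_p$'s indexed by these positions is the product measure. Distinct rows of $\mathbf{B}_n$ then involve disjoint entries, so the events $\{(\mathbf{B}_n\mathbf{g})(k)=0\}$ for $k=1,\dots,n$ are independent and $\mathbb{P}(\mathbf{B}_n\mathbf{g}=0)=\prod_{k=1}^n\mathbb{P}((\mathbf{B}_n\mathbf{g})(k)=0)$. For a fixed $k$, $(\mathbf{B}_n\mathbf{g})(k)=\sum_{j\in[k-w_n,k+w_n]\cap[1,n]}\mathbf{B}_n(k,j)\,\mathbf{g}(j)$ is a $\mathbb{Z}_p$-linear combination of the elements $\mathbf{g}(j)$ with independent Haar-uniform coefficients, so \emph{the main point} is the elementary fact: if $a_1,\dots,a_m$ are independent Haar-uniform elements of $\mathbb{Z}_p$ and $h_1,\dots,h_m\in G$, then $\sum_\ell a_\ell h_\ell$ is uniformly distributed on $\langle h_1,\dots,h_m\rangle$. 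I would prove this in two steps: (i) $a\mapsto a h$ is a continuous homomorphism of $\mathbb{Z}_p$ onto the finite cyclic group $\langle h\rangle$, hence pushes Haar measure forward to the uniform measure on $\langle h\rangle$; (ii) a sum of independent random variables that are uniform on subgroups $H_1,\dots,H_m$ of $G$ is uniform on $H_1+\cdots+H_m$, which one checks for $m=2$ using $|H_1+H_2|=|H_1|\,|H_2|/|H_1\cap H_2|$ and then by induction on $m$. Applied to the generating set $\{\mathbf{g}(j):j\in[k-w_n,k+w_n]\cap[1,n]\}$, which generates $\mathbf{H}_{\mathbf{g}}(k)$ because it differs from $\{\mathbf{g}(j):k-w_n\le j\le k+w_n\}$ only by the zero element (as $\mathbf{g}$ vanishes outside $[1,n]$), this gives $\mathbb{P}((\mathbf{B}_n\mathbf{g})(k)=0)=|\mathbf{H}_{\mathbf{g}}(k)|^{-1}$; multiplying over $k$ closes the argument.

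I do not expect a genuine obstacle here: once the equidistribution fact above is in hand the whole argument is bookkeeping, the only points needing care being the independence of the rows (immediate from the product structure of $\mathcal{B}_{n,w_n}$) and the handling of the boundary window through the convention that $\mathbf{g}(k)=0$ for $k\in[1-w_n,0]\cup[n+1,n+w_n]$, which is exactly what makes the generating set of row $k$ coincide with the one defining $\mathbf{H}_{\mathbf{g}}(k)$.
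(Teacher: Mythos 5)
Your proposal is correct and follows essentially the same route as the paper: reduce via Lemma~\ref{lemma6} to the single identity $\mathbb{P}(\mathbf{B}_n\tilde{\mathbf{g}}=0)=\prod_{k=1}^n|\mathbf{H}_{\mathbf{g}}(k)|^{-1}$, which both arguments obtain by showing $\mathbf{B}_n\tilde{\mathbf{g}}$ is uniform on $\bigoplus_{k=1}^n\mathbf{H}_{\mathbf{g}}(k)$. The only difference is presentational: the paper gets the uniformity in one stroke by noting that $B\mapsto B\tilde{\mathbf{g}}$ is a homomorphism of compact groups and so pushes Haar measure to Haar measure on its image, whereas you verify the same fact entrywise via the convolution of uniform distributions on subgroups; both are sound.
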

\begin{proof}
Let $\tilde{\mathbf{g}}\in G^{[1,n]}$. By appending $\tilde{\mathbf{g}}$ with zeros, we obtain a $\mathbf{g}\in G^{[1,n]}_{{0}_n}$.

Consider the homomorphism $\varphi:\mathcal{B}_{n,w_n}\to G^n$ defined by $\varphi(B)=B\tilde{\mathbf{g}}$. The image of $
\varphi$ is $H=\bigoplus_{k=1}^n \mathbf{H}_\mathbf{g}(k)$. Since $\varphi$ is a homomorphism, it pushes forward the Haar-measure to the Haar-measure. Thus, $\mathbf{B}_n\tilde{\mathbf{g}}$ is a uniform random element of $H$, so 
\begin{equation}\label{Pker}\mathbb{P}(\mathbf{B}_n\tilde{\mathbf{g}}=0)=\prod_{k=1}^n |\mathbf{H}
    _{\mathbf{g}}(k)|^{-1}.\end{equation}
Combining this with Lemma~\ref{lemma6}, the statement follows.
\end{proof}

In Section~\ref{secmomentcalc}, we will prove the following theorem.
\begin{theorem}\label{thmmoment}
Assume that $\lim_{n\to\infty} w_n-\log_p(n)=+\infty$. Then
\[\lim_{n\to\infty} \sum_{\substack{\mathbf{g}\in G^{[1,n]}_{{0}_n}\\\langle\mathbf{g}\rangle=G}} \prod_{k=1}^n |\mathbf{H}
    _{\mathbf{g}}(k)|^{-1}=1.\]
\end{theorem}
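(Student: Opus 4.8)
The plan is to bound the sum over $\mathbf{g}\in G^{[1,n]}_{0_n}$ with $\langle\mathbf{g}\rangle=G$ from above and below by $1+o(1)$. For the lower bound, we isolate the contribution of those $\mathbf{g}$ that are "generating already on a short initial window": one expects that the dominant contribution comes from configurations where the support of $\mathbf{g}$ is concentrated on some interval of length $O(\log_p n)$ on which $\mathbf{g}$ already generates $G$, while $\mathbf{g}\equiv 0$ elsewhere. On such an interval the factors $|\mathbf{H}_\mathbf{g}(k)|^{-1}$ are $|G|^{-1}$ for the $\Theta(\log_p n)$ indices $k$ that "see" the support, and $1$ for all other $k$ (since there $\mathbf{H}_\mathbf{g}(k)=0$, hence $|\mathbf{H}_\mathbf{g}(k)|^{-1}=1$); choosing the number of nonzero coordinates appropriately and using $w_n-\log_p n\to\infty$ one checks this single family already sums to $1+o(1)$. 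This is essentially a direct computation once the right window size is identified.

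The real work is the upper bound, i.e. showing the sum does not overshoot $1$. First I would compare with the full homomorphism moment: by Lemma~\ref{lemmamoment},
\[\mathbb{E}|\Sur(\cok(\mathbf{B}_n),G)|\le \mathbb{E}|\Hom(\cok(\mathbf{B}_n),G)|=\sum_{\mathbf{g}\in G^{[1,n]}_{0_n}}\prod_{k=1}^n|\mathbf{H}_\mathbf{g}(k)|^{-1},\]
so it suffices to control the Hom-moment and then subtract off the non-surjective part. To control the Hom-moment I would set up a transfer-matrix / sequential-exposure argument: read the coordinates $\mathbf{g}(1),\mathbf{g}(2),\dots$ one at a time, tracking as "state" the pair consisting of the last $2w_n$ values of $\mathbf{g}$ (which is what determines each future $\mathbf{H}_\mathbf{g}(k)$) together with the subgroup $K_k=\langle \mathbf{g}(j):j\le k\rangle$ generated so far. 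Each time the running subgroup $K_k$ is a proper subgroup of $G$, the corresponding weight factor $|\mathbf{H}_\mathbf{g}(k)|^{-1}$ is at least $|G|^{-1}\cdot|G:K_k|\cdots$ — more precisely, as long as we have not yet "finished generating," each new coordinate either keeps us in a proper subgroup (paying a factor that makes the number of such configurations summable) or enlarges it, and the number of enlargement steps is at most the length of a composition series of $G$, a constant. The key quantitative input is that to generate $G$ one needs at least $\log_p|G|$ nonzero contributions but the geometric cost of each "window of influence" of length $\le 2w_n+1$ is $|G|^{-1}$, and $w_n$ is large enough ($w_n-\log_p n\to\infty$) that the number of placements, $\binom{n}{\text{(number of active windows)}}$-type factors, is beaten by these geometric factors.

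The main obstacle I anticipate is precisely this bookkeeping: the factors $|\mathbf{H}_\mathbf{g}(k)|^{-1}$ depend on a window of $2w_n+1$ consecutive coordinates, so they are not independent across $k$, and one must organize the sum so that "a nonzero coordinate at position $j$ forces $\Theta(w_n)$ nearby factors to be $\le |G|^{-1}$" is exploited without double-counting the overlapping windows. I would handle this by grouping the support of $\mathbf{g}$ into maximal clusters separated by runs of $>2w_n$ zeros, arguing that (i) distinct clusters contribute independently (their windows are disjoint) and multiplicatively, (ii) within a single cluster of $t$ nonzero coordinates spanning an interval of length $\ell$, the product of the weights is at most $|G|^{-\max(\ell, \text{something}\cdot t)}$ while the number of such clusters placeable in $[1,n]$ is at most $n\cdot|G|^{t}$ or so, and (iii) for the sum to be $\ge 1$ asymptotically one needs a cluster that actually generates $G$, which forces $\ell \gtrsim \log_p|G|$ and (after summing the geometric series in the number and sizes of clusters) pins the total down to $1+o(1)$; the condition $w_n-\log_p n\to\infty$ enters exactly to make the "number of placements times geometric weight" product converge. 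Finally, the restriction to $\langle\mathbf{g}\rangle=G$ only removes mass, and an inclusion–exclusion over subgroups $H\subsetneq G$ (each contributing, by the same estimate applied with $H$ in place of $G$, a quantity $\le 1+o(1)$ times a factor $\le$ the relevant $|\Sur|/|\Hom|$ ratio) shows the removed mass is itself $o(1)$, completing the matching upper bound.
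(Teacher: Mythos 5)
There are genuine gaps on both sides of the bound. For the lower bound, you have the localization picture inverted: in the regime $w_n-\log_p(n)\to+\infty$ the mass of the sum comes from \emph{delocalized} $\mathbf{g}$, and the family you single out contributes $o(1)$, not $1+o(1)$. Concretely, if $\supp\mathbf{g}$ spans an interval of length $m$ placed at distance at least $w_n$ from the ends of $[1,n]$ and every window meeting the support generates $G$, then at least $m+2w_n$ of the factors equal $|G|^{-1}$, so each such $\mathbf{g}$ has weight at most $|G|^{-(m+2w_n)}$; multiplying by the at most $n|G|^{m}$ choices and summing over $m=O(\log_p n)$ gives $O\bigl(\log_p(n)\,np^{-2w_n}\bigr)=o(1)$ (positions near the boundary are handled the same way with a single flank of width $w_n$). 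The correct --- and essentially trivial --- lower bound goes the other way: all but at most $\sum_{G\neq G'\in\Sg(G)}|G'|^n\le(|\Sg(G)|-1)|G|^np^{-n}$ of the $|G|^n$ vectors satisfy $\langle\mathbf{g}\rangle=G$, and every $\mathbf{g}$ has weight at least $|G|^{-n}$, which already gives $1-|\Sg(G)|p^{-n}$. Separately, your final reduction from the Sur-sum to the Hom-sum cannot work as stated: the ``removed mass'' $\sum_{\mathbf{g}:\langle\mathbf{g}\rangle\neq G}\prod_k|\mathbf{H}_\mathbf{g}(k)|^{-1}$ is at least $1$ (the term $\mathbf{g}=0$ alone contributes $1$), and since $|\Hom(\Gamma,G)|=\sum_{H\le G}|\Sur(\Gamma,H)|$ the Hom-moment tends to $|\Sg(G)|$, not to $1$; one must bound the Sur-sum directly.

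The upper bound is where the real content lies, and the cluster estimate in your step (ii) fails whenever $|G|>p$: a cluster of span $\ell$ admits up to $|G|^{\ell}$ fillings, while without tracking \emph{which} subgroup each window generates the only decay available is a factor $p^{-1}$ per nontrivial window, i.e.\ $p^{-(\ell+2w_n)}$ in total, and $\sum_\ell |G|^{\ell}p^{-\ell}$ diverges. So the ``geometric series in the sizes of clusters'' does not close; this is exactly the bookkeeping you name as the anticipated obstacle, and it is left unresolved. The paper's resolution is a two-level induction: stratify $\mathbf{g}$ by the smallest subgroup $G_i$ attained by $k\mapsto\mathbf{H}_\mathbf{g}(k)$, decompose the level set $\{k:\mathbf{H}_\mathbf{g}(k)=G_i\}$ (not the support) into intervals separated by gaps exceeding $2w_n+1$, extract from each of the two flanking windows of width $w_n$ of each interval an extra factor $p^{-w_n}$ beyond the entropy of that flank (Claim~\ref{Claim12}: there $\mathbf{H}_\mathbf{g}(k)$ strictly contains the subgroup generated by the flank alone), and control the stretches between intervals by induction on $i$. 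The resulting competition of $n^{2h}$ placements against $p^{-2hw_n}$ decay is what makes $q=tnp^{-w_n}\to0$ the operative condition, and the surviving configurations are forced to have $\mathbf{H}_\mathbf{g}(k)=G$ for all $k$, whence the upper bound $|G|^{n}|G|^{-n}=1$. Your proposal identifies several correct ingredients (well-separated clusters, tracking the generated subgroup, $n$ versus $p^{-w_n}$), but as written neither the lower nor the upper bound is established.
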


The first part of Theorem~\ref{thmmain} follows by combining Theorem~\ref{thmWood}, Lemma~\ref{lemmamoment} and Theorem~\ref{thmmoment}. Theorem~\ref{thmrank} follows from Theorem~\ref{thmmain} and the following lemma.

\begin{lemma}\label{CLrank}
Let $M_n$ be a sequence of random matrices over $\mathbb{Z}_p$, and let $\overline{M}_n$ be the mod $p$ reduction of $M_n$. Assume that for all finite abelian $p$-group $G$, we have
\[\lim_{n\to\infty} \mathbb{P}(\cok(M_n)\cong G)=\frac{1}{|\Aut(G)|}\prod_{j=1}^{\infty}\left(1-p^{-j}\right).\]

Then for any nonnegative integer $k$, we have
\[\lim_{n\to\infty} \mathbb{P}(\dim \ker \overline{M}_n=k)=p^{-k^2} \prod_{j=1}^{k} \left(1-p^{-j}\right)^{-2} \prod_{j=1}^{\infty}\left(1-p^{-j}\right).\]

\end{lemma}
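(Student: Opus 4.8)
The plan is to identify $\dim\ker\overline{M}_n$ as a group invariant of $\cok(M_n)$ and then transport the assumed convergence of laws through it.

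\textbf{Step 1 (algebraic reduction).} Reducing $\cok(M_n)=\mathbb{Z}_p^n/\RowS(M_n)$ modulo $p$ gives
\[\cok(M_n)/p\cok(M_n)=\mathbb{Z}_p^n/\bigl(\RowS(M_n)+p\mathbb{Z}_p^n\bigr)=\mathbb{F}_p^n/\RowS(\overline{M}_n)=\cok(\overline{M}_n),\]
and since $\overline{M}_n$ is a square matrix over the field $\mathbb{F}_p$ we have $\dim_{\mathbb{F}_p}\cok(\overline{M}_n)=\dim\ker\overline{M}_n$. Hence $\dim\ker\overline{M}_n$ equals the minimal number of generators $\operatorname{rk}_p\cok(M_n)$ of the finite abelian $p$-group $\cok(M_n)$, so that $\{\dim\ker\overline{M}_n=k\}=\{\cok(M_n)\in\mathcal{A}_k\}$, where $\mathcal{A}_k=\{G:\operatorname{rk}_pG=k\}$ is a (typically infinite) union of isomorphism classes.

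\textbf{Step 2 (passing to the limit).} Write $p_n(G)=\mathbb{P}(\cok(M_n)\cong G)$ and $\mu(G)=|\Aut(G)|^{-1}\prod_{j\ge1}(1-p^{-j})$; the hypothesis is $p_n(G)\to\mu(G)$ for each $G$, and $\mu$ is a probability measure on the countable set of isomorphism classes of finite abelian $p$-groups (equivalently $\sum_G|\Aut(G)|^{-1}=\prod_{j\ge1}(1-p^{-j})^{-1}$). From $\sum_Gp_n(G)\le1$ and $\liminf_n\sum_{G\in S}p_n(G)=\sum_{G\in S}\mu(G)$ for every finite set $S$ one first gets $\sum_Gp_n(G)\to1=\sum_G\mu(G)$; then Scheff\'e's lemma (dominated convergence for $0\le(\mu-p_n)^+\le\mu$, together with the convergence of total masses) yields $\sum_G|p_n(G)-\mu(G)|\to0$. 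In particular,
\[\lim_{n\to\infty}\mathbb{P}(\dim\ker\overline{M}_n=k)=\mu(\mathcal{A}_k)=\Biggl(\sum_{G:\,\operatorname{rk}_pG=k}\frac{1}{|\Aut(G)|}\Biggr)\prod_{j=1}^{\infty}\bigl(1-p^{-j}\bigr).\]

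\textbf{Step 3 (evaluating $\mu(\mathcal{A}_k)$).} It remains to verify the identity
\[\sum_{G:\,\operatorname{rk}_pG=k}\frac{1}{|\Aut(G)|}=p^{-k^2}\prod_{j=1}^{k}\bigl(1-p^{-j}\bigr)^{-2}.\]
I would deduce this from the classical generating-function identity for $\sum_{\lambda}x^{\ell(\lambda)}/|\Aut(G_\lambda)|$ over partitions $\lambda$ (with $G_\lambda=\bigoplus_i\mathbb{Z}/p^{\lambda_i}$ and $\ell(\lambda)=\operatorname{rk}_pG_\lambda$ the number of parts), or, equivalently, by recognizing $\mu(\mathcal{A}_k)$ as the $p$-rank marginal of the Cohen--Lenstra distribution, which by Friedman--Washington~\cite{friedman1987distribution} is the $m\to\infty$ limit of the corank distribution of a uniform matrix in $M_m(\mathbb{F}_p)$; the latter is exactly the right-hand side of the lemma by a direct count of matrices of each rank. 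Substituting into the last display of Step~2 completes the proof.

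\textbf{Expected main obstacle.} The statement is largely bookkeeping, and the only point that genuinely needs care is the interchange of the infinite sum over $G$ with the limit in $n$: this is precisely why one uses that the Cohen--Lenstra weights sum to $1$ (a pointwise limit of probability mass functions that lost mass at infinity would make $\mathbb{P}(\dim\ker\overline{M}_n=k)$ fail to converge to $\mu(\mathcal{A}_k)$), so that Scheff\'e's lemma applies. The closed-form evaluation in Step~3 is standard but is the one computational ingredient.
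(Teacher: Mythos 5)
Your argument is correct: Step 1's identification of $\dim\ker\overline{M}_n$ with the $p$-rank of $\cok(M_n)$ via $\cok(M_n)/p\cok(M_n)\cong\cok(\overline{M}_n)$, the Scheff\'e-type upgrade from pointwise to $\ell^1$ convergence (which also kills the event that $\cok(M_n)$ is infinite), and the rank-marginal identity $\sum_{\operatorname{rk}_pG=k}|\Aut(G)|^{-1}=p^{-k^2}\prod_{j=1}^k(1-p^{-j})^{-2}$ are exactly what is needed. The paper offers no proof of its own, deferring entirely to Cohen--Lenstra's Theorem 6.3 (which is the computation in your Step 3), so your write-up simply supplies the standard details behind that citation rather than taking a different route.
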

\begin{proof}
See \cite[Theorem 6.3]{cohen2006heuristics}.
\end{proof}

It follows easily from Lemma~\ref{CLrank} that \eqref{heavytail} excludes the possibility of a Cohen-Lenstra limiting distribution for $\cok(\mathbf{B}_n)$, see \cite[Section 5]{meszaros20242} for more details. Therefore, Theorem~\ref{thmloc} implies the second part of Theorem~\ref{thmmain}.

\section{Calculation of moments}\label{secmomentcalc}

Let $t=|\Sg(G)|$, and let $G_1,G_2,\dots,G_t$ be a list of all the subgroups of $G$ such that \break $|G_1|\ge |G_2|\ge\cdots \ge |G_t|$. Clearly, $G_1=G$ and $G_t=\{0\}$.

For $\mathbf{g}\in G^{[c,d]}_\mathbf{b}$, we define
\[\ell(\mathbf{g})=\max\{i\,:\,\mathbf{H}_{\mathbf{g}}(k)=G_i\text{ for some }k\in [c,d]\}.\]

We also define
\begin{align*}G^{[c,d]}_\mathbf{b}[i]&=\{\mathbf{g}\in G^{[c,d]}_\mathbf{b}\,:\,\ell(\mathbf{g})=i\},\text{ and }\\
G^{[c,d]}_\mathbf{b}[\le i]&=\cup_{j=1}^i G^{[c,d]}_\mathbf{b}[j].\end{align*}

Let $n, w$ be positive integers. We set $q=tn p^{-w}$. For $1\le i\le t$, we define
\[\alpha_i=\alpha_i(q)=\begin{cases}
1&\text{for $i=1$,}\\
(q\beta_i+2t)^2\exp(q^2\beta_i)&\text{otherwise,}
\end{cases}\]
where $\beta_i=\sum_{j=1}^{i-1} \alpha_j$.

\begin{lemma}\label{lemmaalpha}
For all $[c,d]$ such that $d-c+1\le n$, $\mathbf{b}:[c-w,c-1]\cup [d+1,d+w]\to G$ and $1\le i\le t$, we have
\[\sum_{\mathbf{g}\in G^{[c,d]}_\mathbf{b}[i]} \prod_{k=c}^d |\mathbf{H}_\mathbf{g}(k)|^{-1}\le \alpha_i.\]

\end{lemma}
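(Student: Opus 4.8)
The proof is by induction on $i$, the claim for all intervals $[c,d]$ with $d-c+1\le n$ and all boundary conditions $\mathbf{b}$ being proven simultaneously.

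\emph{Base case $i=1$.} Since $G$ is the unique subgroup of $G$ of order $|G|$, we have $\ell(\mathbf{g})=1$ if and only if $\mathbf{H}_\mathbf{g}(k)=G_1=G$ for every $k\in[c,d]$. Hence each such $\mathbf{g}$ contributes exactly $\prod_{k=c}^d|\mathbf{H}_\mathbf{g}(k)|^{-1}=|G|^{-(d-c+1)}$, and since $|G^{[c,d]}_\mathbf{b}|=|G|^{d-c+1}$, the sum is at most $1=\alpha_1$.

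\emph{Inductive step.} Fix $i\ge 2$ and assume the bound for all $j\le i-1$ (all intervals of length $\le n$, all boundaries), so that for every such interval $[a,b]$ and every boundary $\mathbf{b}'$ one has $\sum_{\mathbf{g}\in G^{[a,b]}_{\mathbf{b}'}[\le i-1]}\prod_{k=a}^b|\mathbf{H}_\mathbf{g}(k)|^{-1}\le\beta_i$ (the empty interval contributing the empty product $1\le\beta_i$, as $\beta_i\ge\alpha_1=1$). Note $G_i$ is a proper subgroup of $G$, so $[G:G_i]\ge p$. For $\mathbf{g}$ with $\ell(\mathbf{g})=i$ put $S(\mathbf{g})=\{k\in[c,d]:\mathbf{H}_\mathbf{g}(k)=G_i\}\neq\emptyset$ and let $[a_1,b_1],\dots,[a_m,b_m]$ be its maximal runs in increasing order, with complementary runs $J_0=[c,a_1-1]$, $J_r=[b_r+1,a_{r+1}-1]$ for $1\le r\le m-1$, and $J_m=[b_m+1,d]$. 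On each $J_r$ all values $\mathbf{H}_\mathbf{g}(k)$ lie in $\{G_1,\dots,G_{i-1}\}$, so the restriction of $\mathbf{g}$ to $J_r$ (with the induced boundary) has level $\le i-1$. The key structural fact is that $\mathbf{H}_\mathbf{g}(k)=G_i$ for $k\in[a_r,b_r]$ forces $\mathbf{g}(j)\in G_i$ for every $j\in[a_r-w,b_r+w]$; in particular the $w$ coordinates of $\mathbf{g}$ immediately left of $a_r$ and the $w$ immediately right of $b_r$ lie in $G_i$.

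\emph{Summation scheme.} I would expand $\sum_{\mathbf{g}\in G^{[c,d]}_\mathbf{b}[i]}\prod_{k=c}^d|\mathbf{H}_\mathbf{g}(k)|^{-1}$ as a sum over $m$, over the run-endpoints $a_1\le b_1<a_2\le\cdots\le b_m$, and over $\mathbf{g}$ realizing this structure. The weight factors as $\prod_{r=1}^m|G_i|^{-(b_r-a_r+1)}$ times the product of the level-$\le i-1$ weights of the blocks $J_r$; summing the coordinates of $\mathbf{g}$ strictly inside each $J_r$ against the inductive bound contributes a factor $\le\beta_i$ per block (a partially $G_i$-forced boundary only decreases the sum), while the coordinates in the forced regions $[a_r-w,b_r+w]$ range over $G_i$. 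The crux is that the $w$ coordinates forced into $G_i$ at each end of each run $[a_r,b_r]$ are to be charged against the \emph{adjacent} level-$\le i-1$ block (or the fixed boundary $\mathbf{b}$), where replacing the free sum over $G$ by the sum over $G_i$ saves a factor $([G:G_i])^{-w}\le p^{-w}$ — this is where the single power of $w$ in $q=tnp^{-w}$ comes from. Bookkeeping this, a two-sided interior excursion (choosing $a_r$ and $b_r$ among $\le n$ positions, two forced $w$-blocks, the weights of the two neighbouring blocks, and a crude factor $t$ absorbing auxiliary subgroup choices) contributes of order $q^2\beta_i$, and summing over $m$ such excursions with their ordering constraint gives $\sum_{m\ge0}(q^2\beta_i)^m/m!=\exp(q^2\beta_i)$; the leftmost and rightmost blocks are one-sided and each contribute of order $q\beta_i+2t$ (the additive $2t$ covering the degenerate case where an $S$-run abuts the global boundary). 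Multiplying yields $\alpha_i=(q\beta_i+2t)^2\exp(q^2\beta_i)$.

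The main obstacle is exactly this last step: aligning the window overlaps (adjacent blocks share $2w$ coordinates of $\mathbf{g}$) so that every forced $w$-block is charged against precisely one level-$\le i-1$ sum; handling the degenerate cases where consecutive forced regions overlap each other, or protrude into the fixed boundary $\mathbf{b}$ (so the corresponding saving is unavailable but the contribution vanishes unless $\mathbf{b}$ already satisfies the constraint); and tracking the combinatorial constants carefully enough that the resulting geometric/exponential series collapses to exactly $\alpha_i$ with the stated coefficient $2t$ and exponent $q^2\beta_i$. The base case, the factorization of the weight, and the invocations of the inductive hypothesis on the blocks $J_r$ are routine.
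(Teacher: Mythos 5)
Your overall architecture matches the paper's: induction on $i$, decomposition of the level set $\{k:\mathbf{H}_\mathbf{g}(k)=G_i\}$ into runs separated by gaps of length $>2w+1$, the inductive bound $\beta_i$ on the complementary blocks, a factor $p^{-w}$ extracted at each end of each run, and a sum over the number of runs producing $\exp(q^2\beta_i)$, with the one-sided boundary runs accounting for the $(q\beta_i+2t)^2$ prefactor. The base case is identical. But there is a genuine gap at exactly the step you single out as the crux. Your mechanism for the $p^{-w}$ saving --- ``replacing the free sum over $G$ by the sum over $G_i$ saves a factor $[G:G_i]^{-w}$'' --- is not a proof and misattributes the source of the gain. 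The $w$ coordinates $\mathbf{g}(j)$, $j\in[a_r-w,a_r-1]$, are weighted inside the adjacent block by $\prod_k|\mathbf{H}_\mathbf{g}(k)|^{-1}$, not by $|G|^{-w}$, so restricting their range to $G_i$ does not by itself produce a factor $[G:G_i]^{-w}$: if the adjacent $\mathbf{H}$-values equal some $G_j\neq G_i$ with $|G_j|=|G_i|$ (possible, e.g., for $G=(\mathbb{Z}/p)^2$), the naive ``count $|G_i|^{w}$ times weight $|G_j|^{-w}$'' is $1$. Moreover you cannot both consume the weights on $[a_r-w,a_r-1]$ inside the inductive bound $\beta_i$ for $J_{r-1}$ and separately extract a factor $p^{-w}$ from those same positions; this is precisely the double-charging you flag as ``the main obstacle'' and leave unresolved.

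The missing ingredient is the paper's Claim 12. For $k$ in the $w$-segment $L$ adjacent to a run, the window of $k$ contains all of $L$, so $\mathbf{H}_\mathbf{g}(k)\supset F:=\langle\mathbf{g}(j):j\in L\rangle$; also $F\subset \mathbf{H}_\mathbf{g}(k)\cap G_i$, and since $\mathbf{H}_\mathbf{g}(k)\neq G_i$ while $|\mathbf{H}_\mathbf{g}(k)|\ge|G_i|$ (because $\ell(\mathbf{g})=i$), this intersection is a \emph{proper} subgroup of $\mathbf{H}_\mathbf{g}(k)$, whence $|F|\le p^{-1}|\mathbf{H}_\mathbf{g}(k)|$. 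Consequently, on $L$ the number of configurations generating a given $F$ (at most $|F|^{w}$, summed over the at most $t$ choices of $F$) times the weight (at most $(p^{-1}|F|^{-1})^{w}$) is at most $tp^{-w}$, regardless of the index of $G_i$ in $G$. The paper also excludes these $w$-segments from the blocks on which the induction is invoked (its interior blocks are $[d_j+w+1,c_{j+1}-w-1]$), which is what resolves the alignment issue. With this claim and bookkeeping in place, the rest of your outline (counting run endpoints, the four boundary cases, the exponential series) goes through essentially as you describe.
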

\begin{proof}
We prove by induction on $i$. First consider the case $i=1$. By definition, we have $\mathbf{H}_g(k)=G$ for all $k\in [c,d]$ and $\mathbf{g}\in G^{[c,d]}_\mathbf{b}[1]$. Thus,
\[\sum_{\mathbf{g}\in G^{[c,d]}_\mathbf{b}[1]} \prod_{k=c}^d |\mathbf{H}_\mathbf{g}(k)|^{-1}=\frac{|\mathbf{g}\in G^{[c,d]}_\mathbf{b}[1]|}{|G|^{d-c+1}}\le 1.\]

Now assume that $i>1$. Given a $\mathbf{g}\in G^{[c,d]}_\mathbf{b}[i]$, let
\[\mathbb{W}(\mathbf{g})=\{k\in [c,d]\,:\,\mathbf{H}_\mathbf{g}(k)=G_i\}\]
and let \[\mathcal{W}_i=\{\mathbb{W}(\mathbf{g})\,:\, \mathbf{g}\in G^{[c,d]}_\mathbf{b}[i]\}.\]

For a $W\in \mathcal{W}_i$, let 
\[\mathbb{W}^{-1}(W)=\{\mathbf{g}\in G^{[c,d]}_\mathbf{b}[i]\,:\,\mathbb{W}(\mathbf{g})=W\}.\]
\begin{claim}\label{cl13}
Let $x,y\in [c,d]$ such that $x\le y\le x+2w+1$, and let $\mathbf{g}\in G^{[c,d]}_\mathbf{b}[i]$. If $x,y\in \mathbb{W}(\mathbf{g})$, then $[x,y]\subset \mathbb{W}(\mathbf{g})$.

\end{claim}
\begin{proof}
Let $z\in [x,y]$, then $[z-w,z+w]\subset [x-w,x+w]\cup [y-w,y+w]$. Thus, $\mathbf{H}_\mathbf{g}(z)$ is a subgroup of the subgroup generated by $\mathbf{H}_\mathbf{g}(x)$ and $\mathbf{H}_\mathbf{g}(y)$. By our assumptions, $\mathbf{H}_\mathbf{g}(x)=\mathbf{H}_\mathbf{g}(y)=G_i$. Thus, $\mathbf{H}_\mathbf{g}(z)$ is a subgroup of $G_i$. Since $\ell(\mathbf{g})=i$, we see that $|\mathbf{H}_\mathbf{g}(z)|\ge |G_i|$. Therefore, it follows that $\mathbf{H}_\mathbf{g}(z)=G_i$.  
\end{proof}

Claim~\ref{cl13} gives that any $W\in \mathcal{W}_i$ can be uniquely written as 
\[W=\cup_{j=1}^h [c_j,d_j]\]
for $h=h(W)$ subintervals of $[c,d]$ such that  \begin{equation}\label{eqsep}c_{j+1}>d_j+2w+1\text{ for all }1\le j<h.\end{equation}

Let \[L_j=[c_j-w,c_j-1]\cap [c,d]\quad\text{ and }\quad R_j=[d_j+1,d_j+w]\cap [c,d].\] Note that we have $L_j=[c_j-w,c_j-1]$ for $1<j\le h$ and $L_1=[\max(c,c_1-w),c_1-1]$. Similarly, we have $R_j=[d_j+1,d_j+w]$ for $1\le j< h$ and $R_h=[d_h+1,\min(d_h+w,d)]$. Note that these intervals are all pairwise disjoint by \eqref{eqsep}. 

For a $\mathbf{g}\in \mathbb{W}^{-1}(W)$, let
\[H_j^L(\mathbf{g})=\left\langle \mathbf{g}(k)\,:\,k\in L_j\right\rangle\quad\text{ and }\quad H_j^R(\mathbf{g})=\left\langle \mathbf{g}(k)\,:\,k\in R_j\right\rangle.\]
It is easy to see that $H_j^L(\mathbf{g})$ is a subgroup of $\mathbf{H}_\mathbf{g}(c_j)=G_i$.  Similarly, $H_j^R(\mathbf{g})$ is a subgroup of~$G_i$.
\begin{claim}\label{Claim12} Let $W\in \mathcal{W}_i$  
and $\mathbf{g}\in \mathbf{W}^{-1}(W)$. For all $k\in L_j$, we have $|\mathbf{H}_\mathbf{g}(k)|\ge p|H_j^L(\mathbf{g})|$. 

Similarly, for all $k\in R_j$, we have $|\mathbf{H}_\mathbf{g}(k)|\ge p|H_j^R(\mathbf{g})|$.
\end{claim}
\begin{proof}
Let $k\in L_j$. Since $\ell(\mathbf{g})=i$, we get $|\mathbf{H}_\mathbf{g}(k)|\ge |G_i|$. Since $k\notin W$, we have $\mathbf{H}_\mathbf{g}(k)\neq G_i$. Thus, $\mathbf{H}_\mathbf{g}(k)\cap G_i\neq \mathbf{H}_\mathbf{g}(k)$. Observe that $H_j^L(\mathbf{g})$ is a subgroup of $\mathbf{H}_\mathbf{g}(k)\cap G_i=\mathbf{H}_\mathbf{g}(k)\cap \mathbf{H}_\mathbf{g}(c_j)$, because $L_j$ is contained in both $[k-w,k+w]$  and $[c_j-w,c_j+w]$.  
 
 Therefore, $p|H_j^L(\mathbf{g})|\le p |\mathbf{H}_\mathbf{g}(k)\cap G_i|\le |\mathbf{H}_\mathbf{g}(k)|$. The second statement follows similarly.
\end{proof}

For $W\in\mathcal{W}_i$, let $h=h(W)$. Let $F^L_1,F^R_1,\dots, F_h^L,F_h^R$ be subgroups of $G_i$. We define
\[\mathcal{A}(W,F^L_1,F^R_1,\dots, F_h^L,F_h^R)=\{\mathbf{g}\in \mathbb{W}^{-1}(W)\,:\,H_j^L(\mathbf{g})=F^L_j, H_j^R(\mathbf{g})=F_j^R\text{ for all }1\le j\le h\}. \]

Let $B=B(W)=W\cup [c-w,c-1]\cup [d+1,d+w]\cup \cup_{j=1}^h (L_j\cup R_j)$.

Let 
\[\mathcal{B}(W,F^L_1,F^R_1,\dots, F_h^L,F_h^R)=\{\mathbf{g}\restriction B\,:\,\mathbf{g}\in \mathcal{A}(W,F^L_1,F^R_1,\dots, F_h^L,F_h^R)\}\subset G^B. \]

Clearly,
\begin{equation}\label{BWSize}|\mathcal{B}(W,F^L_1,F^R_1,\dots, F_h^L,F_h^R)|\le |G_i|^{|W|}\prod_{j=1}^h \left(|F_j^L|^{|L_j|}\cdot |F_j^R|^{|R_j|}\right).\end{equation}

For $\mathbf{f}\in \mathcal{B}(W,F^L_1,F^R_1,\dots, F_h^L,F_h^R)$, let
\[\mathcal{A}(W,\mathbf{f})=\{\mathbf{g}\in \mathcal{A}(W,F^L_1,F^R_1,\dots, F_h^L,F_h^R)\,:\,\mathbf{g}\restriction B=\mathbf{f}\}.\]

\begin{claim}\label{Claim13}
    Assume that $c_1> c+w$ and $d_h< d-w$. Then for any $\mathbf{f}\in \mathcal{B}(W,F^L_1,F^R_1,\dots, F_h^L,F_h^R)$, we have
    \[\sum_{\mathbf{g}\in \mathcal{A}(W,\mathbf{f})} \prod_{k=c}^d |\mathbf{H}
    _{\mathbf{g}}(k)|^{-1}\le |G_i|^{-|W|} \beta_i^{h+1}p^{-2hw}\prod_{j=1}^h (|F_j^L||F_j^R|)^{-w}.\]
\end{claim}
\begin{proof}
Let $[\overline{c}_0,\overline{d}_0]=[c,c_1-w-1]$, $[\overline{c}_h,\overline{d}_h]=[d_h+w+1,d]$, and for $0<j<h$, let $[\overline{c}_j,\overline{d}_j]=[d_j+w+1,c_{j+1}-w-1]$. 

For any $\mathbf{g}\in \mathcal{A}(W,\mathbf{f})$, we have
\begin{align*}\prod_{k=c}^d &|\mathbf{H}
    _{\mathbf{g}}(k)|^{-1}\\&=\left( \prod_{k\in W} |\mathbf{H}
    _{\mathbf{g}}(k)|^{-1}\right)\left(\prod_{j=1}^h \prod_{k\in L_i} |\mathbf{H}
    _{\mathbf{g}}(k)|^{-1}\right)\left(\prod_{j=1}^h \prod_{k\in R_i} |\mathbf{H}
    _{\mathbf{g}}(k)|^{-1}\right)\left(\prod_{j=0}^h \prod_{k\in [\overline{c}_j,\overline{d}_i]} |\mathbf{H}
    _{\mathbf{g}}(k)|^{-1}\right)\\&\le |G_i|^{-|W|} 
    \left(\prod_{j=1}^h p^{-w}|F_j^L|^{-w} \right)\left(\prod_{j=1}^h p^{-w}|F_j^R|^{-w} \right)\left(\prod_{j=0}^h \prod_{k\in [\overline{c}_j,\overline{d}_i]} |\mathbf{H}
    _{\mathbf{g}}(k)|^{-1}\right)\end{align*}
where we used Claim~\ref{Claim12}.

 Let $\mathbf{b}_j=\mathbf{f}\restriction ([\overline{c}_j-w,\overline{c}_j-1]\cup [\overline{d}_j+1,\overline{d}_j+w])$. For any $\mathbf{g}\in \mathcal{A}(W,\mathbf{f})$ and $0\le j\le h$, we have $\mathbf{g}\restriction [\overline{c}_j-w,\overline{d}_j+w]\in G^{[\overline{c}_j,\overline{d}_j]}_{\mathbf{b}_j}[\le i-1]$. Therefore,
 \begin{align*}
\sum_{\mathbf{g}\in \mathcal{A}(W,\mathbf{f})} &\prod_{k=c}^d |\mathbf{H}
    _{\mathbf{g}}(k)|^{-1}\\&\le  |G_i|^{-|W|} 
    \left(\prod_{j=1}^h p^{-w}|F_j^L|^{-w} \right)\left(\prod_{j=1}^h p^{-w}|F_j^R|^{-w} \right)\sum_{\mathbf{g}\in \mathcal{A}(W,\mathbf{f})}\prod_{j=0}^h \prod_{k\in [\overline{c}_j,\overline{d}_i]} |\mathbf{H}
    _{\mathbf{g}}(k)|^{-1}\\&\le |G_i|^{-|W|} p^{-2wh} \left(\prod_{j=1}^h (|F_j^L||F_j^R|)^{-w}\right)\left(\prod_{j=0}^h\sum_{\mathbf{g}\in G^{[\overline{c}_j,\overline{d}_j]}_{\mathbf{b}_j}[\le i-1] }\prod_{k=\overline{c}_j}^{\overline{d}_j} |\mathbf{H}
    _{\mathbf{g}}(k)|^{-1}\right) \\&\le |G_i|^{-|W|} p^{-2wh} \left(\prod_{j=1}^h (|F_j^L||F_j^R|)^{-w}\right)  \left(\sum_{j=1}^{i-1}\alpha_j\right)^{h+1},
\end{align*}
where in the last step we used the induction hypothesis. Recalling that $\beta_i=\sum_{j=1}^{i-1}\alpha_j$, the statement follows.
\end{proof}

\begin{claim}\label{Claim14}
If $c_1>c+w$ and $d_h<d-w$, then
\[\sum_{\mathbf{g}\in \mathbb{W}^{-1}(W)} \prod_{k=c}^d |\mathbf{H}
    _{\mathbf{g}}(k)|^{-1}\le p^{-2hw} t^{2h}\beta_i^{h+1}.\]  
\end{claim}
\begin{proof}

Combining Claim~\ref{Claim13} with \eqref{BWSize}, we obtain that for any choice of $F^L_1,F^R_1,\dots, F_h^L,F_h^R$, we have

\[\sum_{\mathbf{g}\in \mathcal{A}(W,F^L_1,F^R_1,\dots, F_h^L,F_h^R)} \prod_{k=c}^d |\mathbf{H}
    _{\mathbf{g}}(k)|^{-1}\le p^{-2hw} \beta_i^{h+1}.\]
    Summing these over all possible choices of $F^L_1,F^R_1,\dots, F_h^L,F_h^R$, we get the statement.
\end{proof}

With a similar proof, one gets:
\begin{claim}
If $c_1\le c+w$ and $d_h<d-w$, then
\[\sum_{\mathbf{g}\in \mathbb{W}^{-1}(W)} \prod_{k=c}^d |\mathbf{H}
    _{\mathbf{g}}(k)|^{-1}\le p^{-(2h-1)w-(c_1-c)} t^{2h}\beta_i^{h}.\]
If $c_1>c+w$ and $d_h\ge d-w$, then
\[\sum_{\mathbf{g}\in \mathbb{W}^{-1}(W)} \prod_{k=c}^d |\mathbf{H}
    _{\mathbf{g}}(k)|^{-1}\le p^{-(2h-1)w-(d-d_h)} t^{2h}\beta_i^{h}.\]
If $c_1\le c+w$ and $d_h\ge d-w$, then
\[\sum_{\mathbf{g}\in \mathbb{W}^{-1}(W)} \prod_{k=c}^d |\mathbf{H}
    _{\mathbf{g}}(k)|^{-1}\le p^{-(2h-2)w-(d-d_h)-(c_1-c)} t^{2h}\beta_i^{h-1}.\]
\end{claim}

Assuming that $h(W)=h$, we have at most ${{d-c+1}\choose{h}}\le \frac{n^h}{h!}$ choices for $c_1,c_2,\dots,c_h$. The same is true for $d_1,d_2,\dots,d_h$. Thus,
\begin{equation}\label{Wibound}|\{W\in \mathcal{W}_i\,:\, h(W)=h\}|\le \left(\frac{n^h}{h!}\right)^2\le \frac{n^{2h}}{(h-1)!}.\end{equation}
Similarly, for any $x,y\in [c,d]$, we have
\begin{align*}|\{W\in \mathcal{W}_i\,:\, h(W)=h, c_1(W)=x\}|&\le \frac{n^{2h-1}}{(h-1)!},\\
|\{W\in \mathcal{W}_i\,:\, h(W)=h, d_h(W)=y\}|&\le \frac{n^{2h-1}}{(h-1)!},\\
|\{W\in \mathcal{W}_i\,:\, h(W)=h, c_1(W)=x,d_h(W)=y\}|&\le \frac{n^{2h-2}}{(h-1)!}.
\end{align*}

We have
\begin{align}\label{sump1}\sum_{\substack{W\in\mathcal{W}_i,\\ c_1(W)>c+w,\\d_{h(W)}(W)<d-w}}&\sum_{\mathbf{g}\in \mathbb{W}^{-1}(W)} \prod_{k=c}^d |\mathbf{H}
    _{\mathbf{g}}(k)|^{-1}\\&=\sum_{h=1}^\infty \sum_{\substack{W\in\mathcal{W}_i,\\h(W)=h,\\c_1(W)>c+w,\\d_{h}(W)<d-w}}\sum_{\mathbf{g}\in \mathbb{W}^{-1}(W)} \prod_{k=c}^d |\mathbf{H}_{\mathbf{g}}(k)|^{-1}\nonumber\\&\le \sum_{h=1}^\infty \frac{n^{2h}}{(h-1)!}
      p^{-2hw} t^{2h}\beta_i^{h+1}\nonumber\\&=q^2\beta_i^2 \sum_{h=1}^\infty \frac{\left(q^2\beta_i\right)^{h-1}}{(h-1)!}\nonumber\\&=q^2\beta_i^2\exp(q^2\beta_i),\nonumber
\end{align}
where at the first inequality, we used Claim~\ref{Claim14} and \eqref{Wibound}.

We also have
\begin{align}\label{sump2}\sum_{\substack{W\in\mathcal{W}_i,\\ c_1(W)\le c+w,\\d_{h(W)}(W)<d-w}}&\sum_{\mathbf{g}\in \mathbb{W}^{-1}(W)} \prod_{k=c}^d |\mathbf{H}
    _{\mathbf{g}}(k)|^{-1}\\&=\sum_{h=1}^\infty \sum_{u=0}^w\sum_{\substack{W\in\mathcal{W}_i\\ h(W)=h,\\c_1(W)=c+u,\\d_{h}(W)<d-w}}\sum_{\mathbf{g}\in \mathbb{W}^{-1}(W)} \prod_{k=c}^d |\mathbf{H}_{\mathbf{g}}(k)|^{-1}\nonumber\\&\le \sum_{u=0}^{w}\sum_{h=1}^\infty \frac{n^{2h-1}}{(h-1)!}
      p^{-(2h-1)w-u} t^{2h}\beta_i^{h}\nonumber\\&=\left(\sum_{u=0}^w p^{-u}\right)tq\beta_i \sum_{h=1}^\infty \frac{\left(q^2\beta_i\right)^{h-1}}{(h-1)!}\nonumber\\&\le 2tq\beta_i\exp(q^2\beta_i).\nonumber
\end{align}

Similarly,
\begin{equation}\label{sump3}
    \sum_{\substack{W\in\mathcal{W}_i,\\ c_1(W)> c+w,\\d_{h(W)}(W)\ge d-w}}\sum_{\mathbf{g}\in \mathbb{W}^{-1}(W)} \prod_{k=c}^d |\mathbf{H}
    _{\mathbf{g}}(k)|^{-1}\le 2tq\beta_i\exp(q^2\beta_i).
\end{equation}

Finally,
\begin{align}\label{sump4}\sum_{\substack{W\in\mathcal{W}_i,\\ c_1(W)\le c+w,\\d_{h(W)}(W)\ge d-w}}&\sum_{\mathbf{g}\in \mathbb{W}^{-1}(W)} \prod_{k=c}^d |\mathbf{H}
    _{\mathbf{g}}(k)|^{-1}\\&=\sum_{h=1}^\infty \sum_{u=0}^w \sum_{v=0}^w\sum_{\substack{W\in\mathcal{W}_i,\\ h(W)=h,\\c_1(W)=c+u,d_{h}(W)=d-v}}\sum_{\mathbf{g}\in \mathbb{W}^{-1}(W)} \prod_{k=c}^d |\mathbf{H}_{\mathbf{g}}(k)|^{-1}\nonumber\\&\le \sum_{u=0}^{w}\sum_{v=0}^{w}\sum_{h=1}^\infty \frac{n^{2h-2}}{(h-1)!}
      p^{-(2h-2)w-u-v} t^{2h}\beta_i^{h-1}\nonumber\\&=\left(\sum_{u=0}^w p^{-u}\right)^2t^2\sum_{h=1}^\infty \frac{\left(q^2\beta_i\right)^{h-1}}{(h-1)!}\nonumber\\&=4t^2\exp(q^2\beta_i).\nonumber
\end{align}

For later use, we record that almost the same argument gives that
\begin{equation}\label{sump4b}\sum_{\substack{W\in\mathcal{W}_i\\h(W)>1\\ c_1(W)\le c+w,\\d_{h(W)}(W)\ge d-w}}\sum_{\mathbf{g}\in \mathbb{W}^{-1}(W)} \prod_{k=c}^d |\mathbf{H}
    _{\mathbf{g}}(k)|^{-1}\le 4t^2\left(\exp(q^2\beta_i)-1\right).\end{equation}
Combining \eqref{sump1},\eqref{sump2},\eqref{sump3} and \eqref{sump4}, we obtain Lemma~\ref{lemmaalpha}.
\end{proof}

Let
\[\mathcal{W}^*_i=\mathcal{W}_i\cap \{[c_1,d_1]\,:\,c\le c_1\le c+w, d-w\le d_1\le d\}.\]

Combining  \eqref{sump1},\eqref{sump2},\eqref{sump3} and \eqref{sump4b}, we see that
\begin{equation}\label{sumpc}\sum_{W\in\mathcal{W}_i\setminus \mathcal{W}^*_i}\sum_{\mathbf{g}\in \mathbb{W}^{-1}(W)} \prod_{k=c}^d |\mathbf{H}
    _{\mathbf{g}}(k)|^{-1}\le q\beta_i(4t+q\beta_i)\exp(q^2\beta_i)+4t^2\left(\exp(q^2\beta_i)-1\right).\end{equation}


\begin{lemma}\label{lemmamomentupper}
Assume that $\lim_{n\to\infty}w_n-\log_p(n)=+\infty$,
then 
\[\limsup_{n\to\infty} \sum_{\substack{\mathbf{g}\in G^{[1,n]}_{{0}_n}\\\langle\mathbf{g}\rangle=G}} \prod_{k=1}^n |\mathbf{H}
    _{\mathbf{g}}(k)|^{-1}\le 1.\]

\end{lemma}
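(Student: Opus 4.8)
\textbf{Proof proposal for Lemma~\ref{lemmamomentupper}.}

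The plan is to apply Lemma~\ref{lemmaalpha} with $[c,d]=[1,n]$ and $w=w_n$, but first one must understand the growth of the constants $\alpha_i=\alpha_i(q)$ as $q=tnp^{-w_n}\to 0$. The hypothesis $w_n-\log_p(n)\to+\infty$ is exactly the statement that $np^{-w_n}\to 0$, hence $q=q_n\to 0$. So the first step is to record that for every fixed $i$, we have $\alpha_i(q)\to \mathbbm{1}[i=1]$ as $q\to0$: indeed $\alpha_1=1$ always, and for $i>1$, $\alpha_i=(q\beta_i+2t)^2\exp(q^2\beta_i)$ with $\beta_i=\sum_{j<i}\alpha_j$ a finite sum of quantities each tending to $0$ except $\alpha_1=1$, so $\beta_i\to 1$ and $\alpha_i\to(0+2t)^2e^0=4t^2\cdot\mathbbm{1}[i>1]$... wait — that does not tend to $0$. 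This is the crux: $\alpha_i(q)$ does \emph{not} vanish as $q\to 0$ for $i>1$; it tends to $4t^2$. So a cruder bound from Lemma~\ref{lemmaalpha} alone is not enough, and we must use the sharper decomposition that isolates $\mathcal{W}^*_i$, i.e. inequality \eqref{sumpc}.

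The key observation is that the term $G^{[1,n]}_{{0}_n}[i]$ we actually want to estimate is not the full sum over $\mathbb{W}^{-1}(W)$ for all $W\in\mathcal{W}_i$, but rather, when we impose $\langle\mathbf{g}\rangle=G$ and the all-zero boundary condition ${0}_n$, the ``boundary-hugging'' windows in $\mathcal{W}^*_i$ behave differently. Here is the structure I would set up. Decompose
\[
\sum_{\substack{\mathbf{g}\in G^{[1,n]}_{{0}_n}\\\langle\mathbf{g}\rangle=G}}\prod_{k=1}^n|\mathbf{H}_\mathbf{g}(k)|^{-1}
=\sum_{i=1}^t\ \sum_{\substack{\mathbf{g}\in G^{[1,n]}_{{0}_n}[i]\\\langle\mathbf{g}\rangle=G}}\prod_{k=1}^n|\mathbf{H}_\mathbf{g}(k)|^{-1}.
\]
For $i=1$: since $\mathbf{H}_\mathbf{g}(k)=G$ for all $k$, the summand is $|G|^{-n}$, and the number of $\mathbf{g}\in G^{[1,n]}_{{0}_n}$ is $|G|^n$, so this contributes at most $1$; one in fact wants it to contribute exactly $\to 1$, but for an \emph{upper} bound $1$ suffices and we can afford to overcount. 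For $i>1$: I claim the contribution tends to $0$. Write the sum over $\mathcal{W}_i=\mathcal{W}^*_i\sqcup(\mathcal{W}_i\setminus\mathcal{W}^*_i)$. The part over $\mathcal{W}_i\setminus\mathcal{W}^*_i$ is bounded by the right side of \eqref{sumpc}, which is $q\beta_i(4t+q\beta_i)\exp(q^2\beta_i)+4t^2(\exp(q^2\beta_i)-1)\to 0$ as $q\to0$ (since $q\to0$, $\beta_i\to$ finite limit, and $\exp(q^2\beta_i)-1\to0$). So the whole burden falls on $W\in\mathcal{W}^*_i$, i.e. the case of a single window $[c_1,d_1]$ that reaches within $w$ of both endpoints $1$ and $n$.

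The main obstacle — and the step I would spend the most care on — is handling $\mathcal{W}^*_i$ for $i>1$ together with the constraints $\langle\mathbf{g}\rangle=G$ and boundary ${0}_n$. For such a $W$, the window $[c_1,d_1]$ on which $\mathbf{H}_\mathbf{g}=G_i$ nearly fills $[1,n]$, and outside it there are only two short stretches $[1,c_1-1]$ and $[d_1+1,n]$, each of length $<w_n$, on which $\mathbf{H}_\mathbf{g}$ is a proper subgroup of $G_i$ — in particular $\langle \mathbf{g}(k): 1\le k\le n\rangle\subseteq G_i\subsetneq G$ for $i>1$... no: $\mathbf{H}_\mathbf{g}(k)$ for $k$ near the endpoints is generated by $\mathbf{g}$-values that may exceed those contributing to $G_i$, so one has to argue that actually $\langle\mathbf{g}\rangle$ is still contained in the group generated by $G_i$ together with the boundary values, which are $0$. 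More carefully: because the boundary condition is all-zero, the values $\mathbf{g}(k)$ for $k\in[1,n]$ near the left end are generated from $\mathbf{g}$-entries in $[1,c_1-1]$, and since these windows never attain an index $>i$, every $\mathbf{H}_\mathbf{g}(k)\subseteq$ (something of size $\le|G_i|$); combined with Claim~\ref{cl13}-type reasoning one shows $\langle\mathbf{g}(k):k\in[1,n]\rangle\subseteq G_i$. Hence for $i>1$ the constraint $\langle\mathbf{g}\rangle=G$ is \emph{impossible}, so the sum over $W\in\mathcal{W}^*_i$ with the generation constraint is empty. That kills the $i>1$ contribution entirely and leaves only $i=1$, bounded by $1$. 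I would double-check the edge logic (what if $c_1=1$ or $d_1=n$, i.e. the window is everything, forcing $\mathbf{H}_\mathbf{g}\equiv G_i$ hence $\ell(\mathbf{g})=i$ but $\langle\mathbf{g}\rangle\subseteq G_i$, again incompatible with $=G$ unless $i=1$) and assemble: $\limsup_n(\text{total})\le 1+\sum_{i=2}^t 0=1$.
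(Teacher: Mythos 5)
Your proposal is correct and follows essentially the same route as the paper's own proof: you use \eqref{sumpc} to show the contribution of $\mathbf{g}$ with $\mathbb{W}(\mathbf{g})\notin\mathcal{W}^*_{\ell(\mathbf{g})}$ vanishes as $q_n\to 0$, and then exploit the all-zero boundary condition (which forces $\mathbf{H}_{\mathbf{g}}(1)\subset\cdots\subset\mathbf{H}_{\mathbf{g}}(w_n+1)$ and symmetrically on the right) to conclude that $\mathbb{W}(\mathbf{g})\in\mathcal{W}^*_{\ell(\mathbf{g})}$ forces $\mathbf{H}_\mathbf{g}\equiv G_{\ell(\mathbf{g})}$ on $[1,n]$, hence $\langle\mathbf{g}\rangle\subseteq G_{\ell(\mathbf{g})}$, so the constraint $\langle\mathbf{g}\rangle=G$ leaves only $\ell(\mathbf{g})=1$ and a contribution of at most $|G|^n|G|^{-n}=1$. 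Your opening observation that $\alpha_i(q)\to 4t^2$ for $i>1$ (so Lemma~\ref{lemmaalpha} alone is insufficient) correctly identifies why the refinement \eqref{sumpc} is needed, which is exactly the paper's reason for recording it.
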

\begin{proof}
Let $q_n=tnp^{-w_n}$. Clearly, $\lim q_n=0$. Since $\beta_i(q)$ is monotone increasing in $q$, there is a $C$ such that $\beta_i(q_n)<C$ for all $n$ and $2\le i\le t$. It follows from \eqref{sumpc} that
\begin{align*}\sum_{\substack{\mathbf{g}\in G^{[1,n]}_{{0}_n}\\W(\mathbf{g})\notin \mathcal{W}_{\ell(\mathbf{g})}^*}} \prod_{k=1}^n |\mathbf{H}
    _{\mathbf{g}}(k)|^{-1}&\le \sum_{i=2}^t \left( q_n\beta_i(q_n)(4t+q_n\beta_i(q_n))\exp(q_n^2\beta_i(q_n))+4t^2\left(\exp(q_n^2\beta_i(q_n))-1\right)\right)\\&\le (t-1)\left( q_nC(4t+q_nC)\exp(q_n^2C)+4t^2\left(\exp(q_n^2C)-1\right)\right). \end{align*}
Here the right hand side converges to $0$. Therefore, 
\[\limsup_{n\to\infty} \sum_{\substack{\mathbf{g}\in G^{[1,n]}_{{0}_n}\\\langle\mathbf{g}\rangle=G}} \prod_{k=1}^n |\mathbf{H}
    _{\mathbf{g}}(k)|^{-1}\le \limsup_{n\to\infty} \sum_{\substack{\mathbf{g}\in G^{[1,n]}_{{0}_n}\\\langle\mathbf{g}\rangle=G,\mathbb{W}(\mathbf{g})\in \mathcal{W}^*_{\ell(\mathbf{g})}}} \prod_{k=1}^n |\mathbf{H}
    _{\mathbf{g}}(k)|^{-1}.\]

Observe that for any $\mathbf{g}\in G^{[1,n]}_{{0}_n}$, we have $\mathbf{H}_{\mathbf{g}}(1)\subset \mathbf{H}_{\mathbf{g}}(2)\subset \cdots \subset \mathbf{H}_{\mathbf{g}}(w_n+1)$ and $\mathbf{H}_{\mathbf{g}}(n)\subset \mathbf{H}_{\mathbf{g}}(n-1)\subset \cdots \subset \mathbf{H}_{\mathbf{g}}(n-w_n)$. Thus, if $\mathbb{W}(\mathbf{g})\in \mathcal{W}^*_{\ell(\mathbf{g})}$, then it follows that $\mathbb{W}(\mathbf{g})=[1,n]$. Thus, $\mathbf{H}_\mathbf{g}(k)=G_{\ell(\mathbf{g})}$ for all $k\in [1,n]$. If we further require that $\langle \mathbf{g}\rangle=G$, then it follows that $\mathbf{H}_\mathbf{g}(k)=G$ for all $k\in [1,n]$. Therefore,

\begin{equation}\label{onlytypical}
    \lim_{n\to\infty} \left(\sum_{\substack{\mathbf{g}\in G^{[1,n]}_{{0}_n}\\\langle\mathbf{g}\rangle=G}} \prod_{k=1}^n |\mathbf{H}
    _{\mathbf{g}}(k)|^{-1}\quad \mathlarger{-}\sum_{\substack{\mathbf{g}\in G^{[1,n]}_{{0}_n}\\\mathbf{H}_\mathbf{g}(k)=G\text{ for all }k\in[1,n]}} \prod_{k=1}^n |\mathbf{H}
    _{\mathbf{g}}(k)|^{-1}\right)=0.
\end{equation}

Thus,
\[\limsup_{n\to\infty} \sum_{\substack{\mathbf{g}\in G^{[1,n]}_{{0}_n}\\\langle\mathbf{g}\rangle=G}} \prod_{k=1}^n |\mathbf{H}
    _{\mathbf{g}}(k)|^{-1}\le \limsup_{n\to\infty} \sum_{\substack{\mathbf{g}\in G^{[1,n]}_{{0}_n}\\\mathbf{H}_\mathbf{g}(k)=G\text{ for all }k\in[1,n]}} \prod_{k=1}^n |\mathbf{H}
    _{\mathbf{g}}(k)|^{-1}\le |G|^n|G|^{-n}=1.
\]

\end{proof}

Observe that \begin{align}\label{sq0}\limsup_{n\to\infty} \sum_{\substack{\mathbf{g}\in G^{[1,n]}_{{0}_n}\\\langle\mathbf{g}\rangle=G}} \prod_{k=1}^n |\mathbf{H}
    _{\mathbf{g}}(k)|^{-1}&\ge \limsup_{n\to\infty}(|G|^n-\sum_{G\neq G'\in \Sg(G)} |G'|^n)|G|^{-n}\\&\ge \limsup_{n\to\infty} (1-|\Sg(G)|p^{-n})=1.\nonumber\end{align}

Combining this with Lemma~\ref{lemmamomentupper}, we obtain Theorem~\ref{thmmoment}.

We finish this section by proving Theorem~\ref{thmdeloc}.

Given a $\tilde{\mathbf{g}}\in G^{[1,n]}$, let  $\mathbf{g}\in G^{[1,n]}_{{0}_n}$  be obtained by appending $\tilde{\mathbf{g}}$ with zeros.

Let \[\mathcal{E}_n'=\{\tilde{\mathbf{g}}\in \mathcal{E}_m\,:\,\mathbf{H}_\mathbf{g}(k)=\mathbb{F}_p\text{ for all }k\in[1,n]\}.\]

Note that since $\mathbb{F}_p$ is simple group, $\langle \mathbf{g}\rangle=\mathbb{F}_p$ for all nonzero $\mathbf{g}$. Thus, under the assumption that $\lim_{n\to\infty} w_n-\log_p(n)\to\infty$, it follows from \eqref{onlytypical} that
\[\limsup_{n\to\infty} \sum_{\tilde{\mathbf{g}}\in \mathcal{E}_n} \prod_{k=1}^n |\mathbf{H}
    _{\mathbf{g}}(k)|^{-1}\le \limsup_{n\to\infty} \sum_{\tilde{\mathbf{g}}\in \mathcal{E}_n'}\prod_{k=1}^n |\mathbf{H}
    _{\mathbf{g}}(k)|^{-1}.\]

Combining this with the union bound, \eqref{Pker} and the definition of $\mathcal{E}_n'$, we see that
\begin{align*}
\limsup_{n\to\infty}\mathbb{P}(\ker \overline{\mathbf{B}}_n\cap \mathcal{E}_n\neq\emptyset)&\le\limsup_{n\to\infty}\sum_{\tilde{\mathbf{g}}\in \mathcal{E}_n} \mathbb{P}(\mathbf{B}_n\tilde{\mathbf{g}}=0)\\&=\limsup_{n\to\infty} \sum_{\tilde{\mathbf{g}}\in \mathcal{E}_n} \prod_{k=1}^n |\mathbf{H}
    _{\mathbf{g}}(k)|^{-1}\\&\le \limsup_{n\to\infty} \sum_{\tilde{\mathbf{g}}\in \mathcal{E}_n'}\prod_{k=1}^n |\mathbf{H}
    _{\mathbf{g}}(k)|^{-1}\\&=\limsup_{n\to\infty}|\mathcal{E}_n'|\cdot p^{-n}=0.
\end{align*}
Thus, Theorem~\ref{thmdeloc} follows.
\section{Localization}\label{secloc}

If $w_n-\log_p(n)$ does not converge to $+\infty$, then there is a $1>\varepsilon>0$ such that $np^{-w_n}\ge\varepsilon$ for infinitely many $n$. Thus, Theorem~\ref{thmloc} follows from the lemma below.

\begin{lemma}\label{loclemma}
Let $\varepsilon>0$ and $\lC\ge \varepsilon $ be an integer. Choose $w\ge 1$ and $n> 2\lC$ such that $np^{-w}\ge \varepsilon$. Let $\mathbf{B}_n$ be a Haar-uniform element of $\mathcal{B}_{n,w}$. Set $n_0= \left\lfloor\frac{n}\lC \right\rfloor$. 

Let
\[V_i=\left\{v\in \mathbb{F}_p^n\,:\,\emptyset\neq \supp v\subset \left[(i-1)n_0+1\,,\,i n_0\right]\right\}.\]

Let us consider the random variables
\[X_i=|\{v\in V_i\,:\, \mathbf{B}_nv=0 \}|.\]

Then these random variables are independent and
\[\mathbb{P}(X_i\ge 1)\ge \left(C\frac{\varepsilon}\lC \right)^{4}\text{ for all }i=1,2,\dots,\lC\text{ and }\]
\[\mathbb{P}(X_i\ge 1\text{ for all }i=1,2,\dots,\lC )\ge \left(C\frac{\varepsilon}\lC \right)^{4\lC }\]
for some positive constant $C$ only depending on $p$.

\end{lemma}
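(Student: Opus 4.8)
\emph{Overview and reduction to one block.} The plan is: (i) use the independence noted above to reduce to a single-block estimate; (ii) prove that estimate by the second-moment (Paley--Zygmund) inequality $\mathbb P(X_i\ge1)\ge \mathbb E[X_i]^2/\mathbb E[X_i^2]$, controlling the second moment through Lemma~\ref{lemmaalpha}. For (i): for $v\in V_i$ the vector $\overline{\mathbf B}_n v$, and hence the random variable $X_i$, depends only on the entries of $\overline{\mathbf B}_n$ in columns $[(i-1)n_0+1,in_0]$; as these column ranges are disjoint and the entries of $\mathbf B_n$ are independent, $X_1,\dots,X_\lC$ are independent, so it suffices to prove $\mathbb P(X_i\ge1)\ge(C\varepsilon/\lC)^4$ for each $i$, whereupon multiplying yields the joint bound. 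Since $n>2\lC$ we have $n_0\ge2$ and $\mu:=n_0p^{-w}\ge \tfrac{np^{-w}}{2\lC}\ge\tfrac{\varepsilon}{2\lC}$. I treat the cases $\mu\le1$ and $\mu>1$ separately.

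\emph{The two moments when $\mu\le1$.} By \eqref{Pker} (valid for any finite abelian $p$-group, taken here to be $\mathbb F_p$), $\mathbb P(\overline{\mathbf B}_n v=0)=\prod_{k=1}^n|\langle v(j):|j-k|\le w\rangle|^{-1}\ge p^{-(D+2w+1)}$ whenever $\supp v$ lies in an interval of length $D+1$, because $\langle v(j):|j-k|\le w\rangle$ is a nonzero subgroup of $\mathbb F_p$ for at most $D+2w+1$ values of $k$. Summing over the $v\in V_i$ supported on an interval $[a,b]\subseteq[(i-1)n_0+1,in_0]$ with $a<b$ (there are $(p-1)^2p^{\,b-a-1}$ such $v$ for each of the $\binom{n_0}{2}$ intervals) gives $\mathbb E[X_i]\ge(p-1)^2p^{-2w-2}\binom{n_0}{2}\ge c_1\mu^2$, $c_1:=\tfrac{(p-1)^2}{4p^2}$. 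For the second moment, identify $(v,v')\in V_i\times V_i$ with the single map $\mathbf g\colon[(i-1)n_0+1,in_0]\to\mathbb F_p^2$, $\mathbf g(j)=(v(j),v'(j))$; then $\{\overline{\mathbf B}_n v=0\}\cap\{\overline{\mathbf B}_n v'=0\}=\{\overline{\mathbf B}_n\mathbf g=0\}$, and \eqref{Pker} with $G=\mathbb F_p^2$ gives $\mathbb P(\overline{\mathbf B}_n\mathbf g=0)=\prod_{k=1}^n|\mathbf H_{\mathbf g}(k)|^{-1}$. Hence, writing $[c,d]$ for block $i$, dropping the conditions $v,v'\neq0$ and the factors with $k\notin[c,d]$ (each $\le1$), and invoking Lemma~\ref{lemmaalpha} with $G=\mathbb F_p^2$ (so $t=|\Sg(\mathbb F_p^2)|=p+3$) summed over its level sets,
\[
\mathbb E[X_i^2]\le\sum_{\mathbf g\in(\mathbb F_p^2)^{[c,d]}_{\mathbf 0}}\ \prod_{k=c}^d|\mathbf H_{\mathbf g}(k)|^{-1}\le\sum_{i=1}^{t}\alpha_i(t\mu)\le\sum_{i=1}^{p+3}\alpha_i(p+3)=:C_p<\infty ,
\]
using $d-c+1=n_0$ (so the parameter $q$ of Lemma~\ref{lemmaalpha} is $t\mu\le t$) and the monotonicity of $\alpha_i,\beta_i$ in their argument; $C_p$ depends only on $p$.

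\emph{Conclusion.} When $\mu\le1$, Paley--Zygmund gives $\mathbb P(X_i\ge1)\ge c_1^2\mu^4/C_p=16C^4\mu^4\ge(C\varepsilon/\lC)^4$ with $C:=(c_1^2/(16C_p))^{1/4}\le1$. When $\mu>1$, partition block $i$ into $m:=\lfloor n_0/\lfloor p^w\rfloor\rfloor\ge1$ disjoint intervals $W$ of length $\lfloor p^w\rfloor$ (possible since $\lfloor p^w\rfloor<n_0$ and $\lfloor p^w\rfloor\ge p\ge2$). The estimates of the previous paragraph apply verbatim to any interval $W\subseteq[1,n]$ of length $\ell$ with $\ell p^{-w}\le1$, and each such $W$ here has $\lfloor p^w\rfloor p^{-w}\in[\tfrac12,1]$; thus the event ``$\ker\overline{\mathbf B}_n$ contains a nonzero vector supported in $W$'' has probability $\ge16C^4(\lfloor p^w\rfloor p^{-w})^4\ge C^4=:c'$. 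These events are independent over the disjoint $W$, so $\mathbb P(X_i\ge1)\ge1-(1-c')^m\ge c'\ge(C\varepsilon/\lC)^4$ (using $\varepsilon\le\lC$). In both cases the per-block bound holds with one constant $C=C(p)$, and multiplying over $i=1,\dots,\lC$ gives the lemma.

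\emph{Main obstacle.} The only substantive point is the second-moment step: recognizing that $\mathbb P(\overline{\mathbf B}_n v=0,\ \overline{\mathbf B}_n v'=0)$ equals the product $\prod_k|\mathbf H_{\mathbf g}(k)|^{-1}$ for the $\mathbb F_p^2$-valued $\mathbf g$, so that Lemma~\ref{lemmaalpha} applies without change and pins $\mathbb E[X_i^2]$ to a constant on the range $\mu\le1$. Given that, the first moment, Paley--Zygmund, and the tiling argument for $\mu>1$ are routine.
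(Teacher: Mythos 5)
Your proof is correct and follows essentially the same route as the paper: independence across blocks from disjoint column supports, a first-moment count over vectors supported on sub-intervals, the key identification of the pair $(v,v')$ with an $\mathbb{F}_p^2$-valued vector so that Lemma~\ref{lemmaalpha} bounds the second moment by a constant, and Paley--Zygmund. The only (immaterial) difference is in handling $n_0>p^w$: the paper simply restricts attention to a sub-block of length $n_1=\min(n_0,p^w)$ and uses $Y_i\le X_i$, whereas you tile the block into intervals of length $p^w$ and use independence across tiles; both yield the same constant-order bound.
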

\begin{proof}
Let us set $n_1=\min\left(n_0,p^{w}\right)$.
Using the assumptions $n>2L$ and $np^{-w}\ge \varepsilon$, we have 
\begin{equation}\label{qbound}\frac{n_1}{p^w}= \min\left(\left\lfloor\frac{n}{\lC}\right\rfloor\frac{1}{p^w},1\right)\ge \min\left(\frac{n}{2\lC p^w},1\right)\ge \min\left(\frac{\varepsilon}{2\lC },1\right)=\frac{\varepsilon}{2\lC }, \end{equation}
where in the last step we used $\lC\ge\varepsilon$.

Let $(i-1)n_0+1\le c<d\le (i-1) n_0+n_1$, and let
\[U=\{v\in \mathbb{F}_p^n\,:\,\{c,d\}\subset \supp v\subset [c,d]\}.\]

Note that
\begin{equation}\label{Ubound}|U|=(p-1)^2p^{d-c-1}\ge p^{d-c-1}.\end{equation}
For all $v\in U$, the distribution of $\mathbf{B}_nv$ is uniform on some subgroup of 
\[U'=\{v\in \mathbb{F}_p^n\,:\, \supp v\subset [c-w,d+w].\}\]

Thus,
\[\mathbb{P}(\mathbf{B}_nv=0)\ge p^{-(d-c+1+2w)}.\]
Combining this with \eqref{Ubound}, we obtain
\begin{equation}\label{EUb}\mathbb{E}|\{v\in U\,:\,\mathbf{B}_nv=0\}|\ge p^{-2w-2}.\end{equation}

We define
\[V_i'=\left\{v\in \mathbb{F}_p^n\,:\,\emptyset\neq \supp v\subset \left[(i-1)n_0+1\,,\,(i-1) n_0+n_1\right]\right\},\]
and the random variables
\[Y_i=|\{v\in V_i'\,:\, \mathbf{B}_nv=0 \}|.\]

Clearly, $V_i'\subset V_i$, and $Y_i\le X_i$.
Then using \eqref{EUb}, we get
\[\mathbb{E}Y_i\ge \sum_{(i-1)n_0+1\le c<d\le (i-1)n_0+n_1} p^{-2w-2}={{n_1}\choose{2}}p^{-2w-2}\ge \frac{n_1^2 p^{-2w}}{4p^2},\]
using the fact that $n_1>1$ so $n_1-1\ge \frac{n_1}2$.

Let $\mathbf{B}'$ be the submatrix of $\mathbf{B}_n$ determined by the columns and rows with index in the interval $[a,z]=[(i-1)n_0+1-w\,,\,(i-1) n_0+n_1+w]\cap [1,n]$. Note $\mathbf{B}'$ has the same distribution as an $n_2\times n_2$ Haar-uniform band matrix with band width $w$, where
\[n_2=\min((i-1) n_0+n_1+w,n)-\max((i-1) n_0+1-w,1)+1\le n_1+2w\le 3p^w.\]

Then
\[\mathbb{E} Y_i^2=\mathbb{E}|\{(v,v')\in (V_i')^2\,:\,\mathbf{B}_nv=0,\mathbf{B}_nv'=0 \}|\le \mathbf{E}|\{u\in (\mathbb{F}_p\times 
\mathbb{F}_p)^{n_2}\,:\,\mathbf{B}'u=0 \}|\le \gamma\]
for some $\gamma$. The existence of $\gamma$ follows from Lemma~\ref{lemmaalpha} and the fact that $n_2 p^{-w}\le 3$. (Note that $\alpha_i$ is monotone increasing in $q$.) The second to last inequality follows from the observation that $\mathbf{B}_nv=0,\mathbf{B}_nv'=0$ if and only if $\mathbf{B}'u=0$, where $u\in (\mathbb{F}_p\times \mathbb{F}_p)^{n_2}$ such that \[u_j=\left(v_{a+j-1},v'_{a+j-1}\right).\]

From the Paley-Zygmund inequality,
\begin{equation}\label{paley}\mathbb{P}(X_i\ge 1)\ge\mathbb{P}(Y_i\ge 1)\ge \frac{(\mathbb{E} Y_i)^2}{\mathbb{E} Y_i^2}\ge \frac{n_1^4 p^{-4w}}{16p^4\gamma}\ge \left(C\frac{\varepsilon}\lC \right)^4,\end{equation}
where $C=\frac{1}{4p\sqrt[4]{\gamma}}$. At the last inequality we used \eqref{qbound}.

Note that for all $v\in V_i$ the event $\mathbf{B}_n v=0$ on the depend on the columns of $\mathbf{B}_n$  indexed by $[(i-1)n_0+1,in_0]$. Thus, $X_1,\dots,X_\lC $ are independent. Combining this with  \eqref{paley} , we obtain
\[\mathbb{P}(X_i\ge 1\text{ for all }i=1,2,\dots,\lC )\ge  \left(C\frac{\varepsilon}\lC \right)^{4\lC }.\]
\end{proof}

\section{Tightness}\label{sectight}
In this section, we prove Theorem~\ref{thmtight}.

First assume that $\liminf_{n\to\infty} w_n-\log_p(n)>-\infty$. Let $q_n=2np^{-w_n}$. Then there is a $q$ such that $q_n\le q$ for all $n$. Combining Lemma~\ref{lemmamoment} and Lemma~\ref{lemmaalpha}, we see that
\begin{align*}
\mathbb{E}|\ker \overline{\mathbf{B}}_n|&=\mathbb{E}|\Hom(\cok(\mathbf{B}_n),\mathbb{F}_p)|\nonumber\\&=\sum_{\mathbf{g}\in G^{[1,n]}_{{0}_n}} \prod_{k=1}^n |\mathbf{H}
    _{\mathbf{g}}(k)|^{-1}\nonumber\\&\le 1+(q_n+4)^2\exp\left(q_n^2\right)\\&\le 1+(q+4)^2\exp(q^2).\nonumber
\end{align*}

Thus, $\sup \mathbb{E}|\ker \overline{\mathbf{B}}_n|<\infty$, which by Markov's inequality implies that the sequence $|\ker \overline{\mathbf{B}}_n|$ is tight. Therefore, the sequence $\dim \ker \overline{\mathbf{B}}_n=\log_p |\ker \overline{\mathbf{B}}_n|$ is also tight.

Now, assume that $\liminf_{n\to\infty} w_n-\log_p(n)=-\infty$.

Let $\lC$ be a positive integer and set $\varepsilon=\lC$. There are infinitely many $n$ such that $n p^{-w_n}\ge \varepsilon$ and $n> 2\lC$. For any such $n$, Lemma~\ref{loclemma} can be applied to give that $\mathbb{P}(X_i\ge 1)\ge C^4$. Let $I_i$ be the indicator of the event that $X_i\ge 1$. By Lemma~\ref{loclemma}, $I_1,I_2,\dots,I_\lC$ are independent and
\[\mathbb{E}\sum_{i=1}^{\lC} I_i\ge \lC C^4.\]
It follows from Hoeffding inequality that
\[\mathbb{P}\left(\dim \ker \overline{\mathbf{B}}_n\le \frac{\lC C^4}2\right)\le \mathbb{P}\left(\sum_{i=1}^{\lC} I_i\le \frac{\lC C^4}2\right)\le \exp\left(-\frac{C^8\lC}{2}\right),\]
which clearly makes it impossible that the sequence $\dim \ker \overline{\mathbf{B}}_n$ is tight. 

 \section{Normalized dimension}\label{secnorm}
 In this section, we prove Theorem~\ref{thmnormalized}.

First assume that $\lim_{n\to\infty} w_n=\infty.$ Given $n$, let us write $[1,n]$ as the disjoint union of $h=\lceil\frac{n}{w_n}\rceil$ intervals $I_1,\dots,I_h$ such that $|I_j|=n_j\le w_n.$ Let $\mathbf{M}_j$ be the $n_j\times n_j$ submatrix of $\overline{\mathbf{B}}_n$ determined by the rows and columns indexed by $I_j$. Since $n_j\le w_n$, we see that $\mathbf{M}_j$ is a uniform random $n_j\times n_j$ matrix over $\mathbb{F}_p$. Finally, let $\mathbf{C}_n$ be obtained from $\overline{\mathbf{B}}_n$ by setting to $0$ all the entries in the region $\cup_{j=1}^h I_j\times I_j$. Note that even if we condition on $\mathbf{C}_n$, $\mathbf{M}_j$ is still a uniform random $n_j\times n_j$ matrix over $\mathbb{F}_p$ and $\mathbf{M}_1,\dots,\mathbf{M}_h$ are conditionally independent. Given a vector $v\in \mathbb{F}_p^n$, let $v_{I_j}\in \mathbb{F}_p^{n_j}$ be its restriction to $I_j$.

Note that $\mathbf{B}_n v=0$ if and only if $\mathbf{M}_j v_{I_j}=-(\mathbf{C}_nv)_{I_j}$ for all $1\le j\le h$. Using the conditional independence of $\mathbf{M}_1,\dots,\mathbf{M}_h$, we obtain
\[\mathbb{P}(\mathbf{B}_n v=0\,|\,\mathbf{C}_n)=\prod_{j=1}^h \mathbb{P}(\mathbf{M}_j v_{I_j}=-(\mathbf{C}_nv)_{I_j}\,|\,\mathbf{C}_n). \]

If $v_{I_j}\neq 0$, then $\mathbf{M}_j v_{I_j}$ even conditioned on $\mathbf{C}_n$ is a uniform random element of $\mathbb{F}_p^{n_j}$. Thus, in this case
\[\mathbb{P}(\mathbf{M}_j v_{I_j}=-(\mathbf{C}_nv)_{I_j}\,|\,\mathbf{C}_n)=p^{-n_j}.\]
Therefore,
\[\mathbb{P}(\mathbf{B}_n v=0)\le \prod_{\substack{1\le j\le h\\v_{I_j}\neq 0}} p^{-n_j}.\]
So
\[\mathbb{E}|\ker \mathbf{B}_n|=\sum_{v\in\mathbb{F}_p^n} \mathbb{P}(\mathbf{B}_n v=0)\le \prod_{j=1}^h(1+(p^{n_j}-1)p^{-n_j})\le 2^h\le 2^{n/w_n+1}.\]

Combining this with Markov's inequality,  for all $\varepsilon>0$, we obtain that
\begin{multline*}\limsup_{n\to\infty}\mathbb{P}(\dim\ker \overline{\mathbf{B}}_n\ge \varepsilon n)=\limsup_{n\to\infty}\mathbb{P}(|\ker \overline{\mathbf{B}}_n|\ge p^{\varepsilon n}) 
\\\le\limsup_{n\to\infty}\frac{\mathbb{E}|\ker \overline{\mathbf{B}}_n|}{p^{\varepsilon n}}\le\limsup_{n\to\infty}2^{n/w_n+1-\varepsilon n}=0. \end{multline*}

Thus,
 \[\frac{\dim \ker \overline{\mathbf{B}}_n}n\]
 converge to zero in probability.

Now assume that $w_n$ does not converge to infinity. Then there is a finite $w$ such that $w_n=w$ infinitely often. Consider such an $n$. Let $Z_i$ be the indicator of the event that the $i$th column of $\overline{\mathbf{B}}_n$ is the all zero vector. We have $\mathbb{E}{Z}_i\ge p^{-2w-1}$, so  $\mathbb{E}\sum_{i=1}^n{Z}_i\ge p^{-2w-1}n$. Clearly, $Z_1,\dots,Z_n$ are independent, thus by Hoeffding's inequality, we see that
\[\mathbb{P}\left(\dim \ker\overline{\mathbf{B}}_n\le \frac{1}2 p^{-2w-1}n\right)\le \mathbb{P}\left(\sum_{i=1}^{n} Z_i\le \frac{1}2p^{-2w-1}n\right)\le \exp\left(-\frac{1}2 p^{-4w-2}n\right),\]
which shows that \[\frac{\dim \ker \overline{\mathbf{B}}_n}n\]
 can not converge to zero in probability.

\section{Open problems}\label{secopen}

\subsection{Behaviour at criticality}

Our tightness result given in Theorem~\ref{thmtight} points towards the following conjecture.
\begin{conjecture}
  There is a one parameter family of distributions $(\nu_{\alpha})_{\alpha\in\mathbb{R}}$ on the set of finite abelian $p$-groups with the following property. Let $n_1<n_2<\dots$ be a sequence of positive integers. Let $\mathbf{B}_i$ be an $n_i\times n_i$ band matrix over $\mathbb{Z}_p$ with band width $w_i$. Let us assume that $\lim_{i\to\infty} w_i-\log_p(n_i)=\alpha$. Then for a finite abelian $p$-group $G$, we have
  \[\lim_{i\to\infty} \mathbb{P}(\cok(\mathbf{B}_i)\cong G)=\nu_\alpha(G).\]
\end{conjecture}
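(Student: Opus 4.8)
The natural line of attack is the moment method used for the first part of Theorem~\ref{thmmain}, but now carried out so as to extract the exact (positive) limits of the surjective moments rather than merely bounding them. Fix a finite abelian $p$-group $G$, put $t=|\Sg(G)|$, and set $q_i=tn_ip^{-w_i}$; since $w_i-\log_p(n_i)\to\alpha$ we have $q_i\to q_\alpha:=tp^{-\alpha}\in(0,\infty)$. By Lemma~\ref{lemmamoment} the plan reduces to two steps: (i) prove that
\[\sum_{\substack{\mathbf{g}\in G^{[1,n_i]}_{{0}_{n_i}}\\\langle\mathbf{g}\rangle=G}}\prod_{k=1}^{n_i}|\mathbf{H}_{\mathbf{g}}(k)|^{-1}\longrightarrow M_\alpha(G)\qquad\text{as }i\to\infty\]
for an explicit limit $M_\alpha(G)$ depending only on $\alpha$ and the subgroup lattice of $G$; and (ii) show that the collection $(M_\alpha(G))_G$ is the surjective moment sequence of a unique probability distribution $\nu_\alpha$, so that by a Sawin--Wood-type moment-determinacy theorem~\cite{sawin2022moment} the law of $\cok(\mathbf{B}_i)$ converges to $\nu_\alpha$, proving the conjecture.

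For step (i) I would refine the analysis of Section~\ref{secmomentcalc}. The decomposition there sorts $\mathbf{g}$ by the set $\mathbb{W}(\mathbf{g})$ on which $\mathbf{H}_{\mathbf{g}}$ attains its minimal value $G_{\ell(\mathbf{g})}$; when $q_i\to q_\alpha\in(0,\infty)$ the ``defect'' contributions of \eqref{sump1}--\eqref{sump4} no longer vanish, but stay bounded and are governed by the Poisson-type series $\sum_{h\ge1}(q_i^2\beta_i)^{h-1}/(h-1)!$. This is precisely the signature of a limiting \emph{Poisson} number of short ``defect intervals'' scattered along $[1,n_i]$, each carrying a chain of proper subgroups of $G$, on a background where $\mathbf{H}_{\mathbf{g}}(k)=G$ for all other $k$. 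The actual work is to replace the lossy inequality \eqref{BWSize} (which merely sums over all boundary subgroups $F^L_j,F^R_j$) by an exact enumeration of the $\mathbf{g}$ realising a given defect profile; this should yield $M_\alpha(G)$ as a finite explicit sum over such profiles, each term a product of lattice-theoretic factors attached to $G$ times a power of $q_\alpha$ --- in particular a real-analytic function of $p^{-\alpha}$ tending to $1$ as $\alpha\to+\infty$, recovering Theorem~\ref{thmmoment}.

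Step (ii) is the crux and the main obstacle. A moment-determinacy statement of Sawin--Wood type requires that $M_\alpha(G)$ not grow too fast in $G$, but the a priori bounds of Lemma~\ref{lemmaalpha} only give a tower of exponentials in the subgroup lattice of $G$, hopelessly weak for this purpose. One therefore needs a genuinely sharp estimate, ideally $M_\alpha(G)\le|G|^{O_\alpha(1)}$ or at worst $\exp\!\big(O_\alpha(\log^2|G|)\big)$. I expect the cleanest route is structural rather than purely enumerative: show that $\cok(\mathbf{B}_i)$ is asymptotically the direct sum of a Cohen--Lenstra ``bulk'' contribution and the cokernels of finitely many \emph{asymptotically independent, Poisson-many} ``local blocks'' of size $O(p^{w_i})$, one around each localized kernel vector of $\overline{\mathbf{B}}_i$ (in the spirit of Lemma~\ref{loclemma} and the localization picture of Section~\ref{secGaussian}). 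Such a decoupling would at once give tightness of the full group sequence $\cok(\mathbf{B}_i)$ (strengthening Theorem~\ref{thmtight}), the needed growth bound on the moments, and an intrinsic description of $\nu_\alpha$ as the law of the cokernel of a limiting ``semi-infinite random band'' operator; the moment computation of step (i) would then only serve to identify this limit and pin down the dependence on $\alpha$. The hard part is making the decoupling quantitative: the entries of $\mathbf{B}_i$ just outside a block genuinely link it to its neighbours, and controlling this interaction uniformly over all target groups $G$ is exactly what is missing.
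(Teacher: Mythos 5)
This statement is a \emph{conjecture}: the paper offers no proof of it, and explicitly leaves it open, so there is nothing to compare your argument against except the paper's surrounding results. What you have written is a research programme rather than a proof, and you acknowledge as much; but beyond being incomplete, it contains a concrete step that cannot work as stated. In step (ii) you hope to prove a moment bound of the form $M_\alpha(G)\le |G|^{O_\alpha(1)}$, or at worst $\exp\bigl(O_\alpha(\log^2|G|)\bigr)$, in order to invoke a Sawin--Wood-type determinacy theorem. This is contradicted by Theorem~\ref{thmloc} (via Lemma~\ref{loclemma}): in the critical regime $w_i-\log_p(n_i)\to\alpha$ one has $n_ip^{-w_i}\to p^{-\alpha}>0$, so for every fixed $\lC$ and all large $i$, $\mathbb{P}(\dim\ker\overline{\mathbf{B}}_i\ge \lC)\ge(\kappa/\lC)^{4\lC}$. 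Since every homomorphism to $(\mathbb{Z}/p\mathbb{Z})^k$ factors through $\cok(\mathbf{B}_i)/p$, this forces
\[\liminf_{i\to\infty}\mathbb{E}\bigl|\Sur\bigl(\cok(\mathbf{B}_i),(\mathbb{Z}/p\mathbb{Z})^k\bigr)\bigr|\;\ge\;\sup_{\lC}\tfrac12\,p^{k\lC}(\kappa/\lC)^{4\lC}\;=\;\exp\bigl(\Theta(p^{k/4})\bigr),\]
which is doubly exponential in $k$ and in particular vastly exceeds $|\wedge^2 G|=p^{\binom{k}{2}}$. So the limiting moments, if they exist, lie far outside the regime where any known moment-determinacy or moment-to-distribution theorem applies; the moment method cannot close the argument for this conjecture without a genuinely new uniqueness input.

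Step (i) is also not established: the estimates \eqref{sump1}--\eqref{sump4} are one-sided upper bounds obtained from the lossy inequalities \eqref{BWSize} and Claim~\ref{Claim13}, and turning them into exact asymptotics (matching lower bounds, convergence rather than mere boundedness of the defect-profile sums, and control of boundary effects) is precisely the content one would have to supply. Your structural picture --- a Cohen--Lenstra bulk decorated by a Poisson number of local defect blocks of size $O(p^{w_i})$ --- is consistent with the paper's heuristics (the paper itself notes after the conjecture that $\nu_\alpha$ must have a heavier tail than Cohen--Lenstra, exactly because of Theorem~\ref{thmloc}), and may well be the right route to an \emph{intrinsic} construction of $\nu_\alpha$ that bypasses moments altogether. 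But as written, neither the existence of the limits $M_\alpha(G)$, nor the decoupling of the blocks, nor any substitute for moment determinacy is proved, so the conjecture remains open.
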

   Provided that this conjecture is true, $\nu_\alpha$ must have a tail which is heavier than the tail of the Cohen-Lenstra distribution as we can see from Theorem~\ref{thmloc}. Note that if the $2$-torsion of the homology of determinatal hypertrees has a limiting distribution, then this limiting distribution also has a tail which is heavier than the tail of the Cohen-Lenstra distribution \cite{meszaros20242}.

\subsection{Symmetric band matrices}

\begin{problem}
    Can we prove an analogue of Theorem~\ref{thmmain} for symmetric band matrices? The Cohen-Lenstra distribution should be replaced with a modified version of the Cohen-Lenstra distribution given in \cite{clancy2015note,clancy2015cohen}.
\end{problem}

\subsection{General entry distributions}

Let $\tau$ be a probability measure on $\mathbb{Z}_p$ which is non-degenerate in the sense that $\tau(i+p\mathbb{Z}_p)<1$ for all $i=0,1,\dots,p-1.$ Consider band matrices where the non-zero entries are i.i.d. with law~$\tau$.

\begin{problem}
    Can we prove an analogue of Theorem~\ref{thmmain} for such a matrices band matrices? Is it true that the phase transition happens at $c(\tau)\log_p(n)$, where $c(\tau)$ is some function of $\tau$.
\end{problem}

\subsection{General prescription matrices}

By a prescription matrix $P$, we mean an $n\times n$ matrix, where each entry is either $0$ or $1$. The set of $P$-band matrices is defined as
\[\mathcal{B}_{P}=\{B\in M_n(\mathbb{Z}_p)\,:\,B(i,j)=0\text{ for all }i,j\text{ such that }P(i,j)=0\}.\]

Let $\mathbf{B}_P$ a Haar uniform element of $\mathcal{B}_{P}$. 

We say a sequence $P_n$ of prescription matrices is weakly Cohen-Lenstra, if the distribution of $\cok(\mathbf{B}_{P_n})$ converges to the Cohen-Lenstra distribution. We say a sequence $P_n$ of prescription matrices is strongly Cohen-Lenstra, if $\lim_{n\to\infty} \mathbb{E}|\Sur(\cok(\mathbf{B}_{P_n}),G)|=1$ for all finite abelian $p$-group $G$. It follows from Theorem~\ref{thmWood} that if $P_n$ is strongly Cohen-Lenstra, then it is weakly Cohen-Lenstra.

\begin{problem}
    It is true that $(P_n)$ is weakly Cohen-Lenstra if and only if strongly Cohen-Lenstra?
\end{problem}

\begin{problem}\label{problem22}
    Try to characterize the weakly/strongly Cohen-Lenstra sequences. 
\end{problem}

Note that Theorem~\ref{thmmain} provides such a characterization in the special case when $P_n$ is an $n\times n$ matrix such that
\[P_n(i,j)=\begin{cases}1&\text{if $|i-j|\le w$},\\0&\text{otherwise}.\end{cases}\]

A natural and interesting special case Problem~\ref{problem22} is the following:
\begin{problem}
Find the critical band width for $d$-dimensional band matrices.
\end{problem}

Note that the set of strongly Cohen-Lenstra sequences of prescription matrices is upwardly closed. More precisely, we have the following lemma.

\begin{lemma}
Let $(P_n)$ and $(Q_n)$ be two sequences of prescription matrices such that for all $n$, $P_n$ and $Q_n$ has the same dimensions, and $P_n\le Q_n$ entrywise.  If $P_n$ is strongly Cohen-Lenstra, then $Q_n$ is also strongly Cohen-Lenstra.
\end{lemma}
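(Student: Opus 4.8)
The plan is to reduce the claim to a monotonicity statement about the quantities appearing in Lemma~\ref{lemma6}. Fix a finite abelian $p$-group $G$. For a prescription matrix $R$ and an element $\mathbf{g}\in G^n$, write $\mathcal{B}_R(\mathbf{g})$ for the (random over Haar-uniform $\mathbf{B}_R\in\mathcal{B}_R$) event $\mathbf{B}_R\mathbf{g}=0$. As in the proof of Lemma~\ref{lemmamoment}, the map $B\mapsto B\mathbf{g}$ is a $\mathbb{Z}_p$-module homomorphism from $\mathcal{B}_R$ onto a subgroup $H_R(\mathbf{g})=\bigoplus_{i=1}^n\langle \mathbf{g}(j):R(i,j)=1\rangle$ of $G^n$, and hence pushes Haar measure to the uniform measure on $H_R(\mathbf{g})$, so $\mathbb{P}(\mathbf{B}_R\mathbf{g}=0)=|H_R(\mathbf{g})|^{-1}=\prod_{i=1}^n|\langle \mathbf{g}(j):R(i,j)=1\rangle|^{-1}$.

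The key observation is that this probability is \emph{monotone decreasing} in the prescription matrix: if $P_n\le Q_n$ entrywise, then for each row $i$ the generating set $\{\mathbf{g}(j):P_n(i,j)=1\}$ is a subset of $\{\mathbf{g}(j):Q_n(i,j)=1\}$, so $\langle \mathbf{g}(j):P_n(i,j)=1\rangle$ is a subgroup of $\langle \mathbf{g}(j):Q_n(i,j)=1\rangle$, giving $|H_{Q_n}(\mathbf{g})|^{-1}\le |H_{P_n}(\mathbf{g})|^{-1}$, i.e. $\mathbb{P}(\mathbf{B}_{Q_n}\mathbf{g}=0)\le \mathbb{P}(\mathbf{B}_{P_n}\mathbf{g}=0)$ for every fixed $\mathbf{g}\in G^n$. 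Summing over $\mathbf{g}\in G^n$ and invoking the first identity of Lemma~\ref{lemma6}, we get $\mathbb{E}|\Hom(\cok(\mathbf{B}_{Q_n}),G)|\le \mathbb{E}|\Hom(\cok(\mathbf{B}_{P_n}),G)|$ for every finite abelian $p$-group $G$.

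Now I would upgrade this inequality for $\Hom$-moments to the desired statement about $\Sur$-moments. Recall the standard relation $\mathbb{E}|\Hom(\cok(M),G)|=\sum_{H\le G}\mathbb{E}|\Sur(\cok(M),H)|$, valid for any random matrix $M$, since every homomorphism to $G$ surjects onto a unique subgroup. By hypothesis $\lim_n\mathbb{E}|\Sur(\cok(\mathbf{B}_{P_n}),H)|=1$ for every finite abelian $p$-group $H$; summing over the finitely many subgroups $H\le G$ gives $\lim_n\mathbb{E}|\Hom(\cok(\mathbf{B}_{P_n}),G)|=|\Sg(G)|$ for every $G$. Combined with the monotonicity inequality, $\limsup_n\mathbb{E}|\Hom(\cok(\mathbf{B}_{Q_n}),G)|\le |\Sg(G)|$ for every $G$. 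On the other hand, $\mathbb{E}|\Sur(\cok(\mathbf{B}_{Q_n}),H)|\ge 1$ always, because $\mathbf{g}=0$ contributes $|\Hom|\ge 1$ — more carefully, one always has $\mathbb{E}|\Hom(\cok(\mathbf{B}_{Q_n}),G)|\ge 1$ from the trivial homomorphism, so $\mathbb{E}|\Sur(\cok(\mathbf{B}_{Q_n}),\{0\})|=1$ and, inductively on $|H|$ using $\mathbb{E}|\Sur(\cok(\mathbf{B}_{Q_n}),H)|\ge 0$ together with the sum relation, $\liminf_n\mathbb{E}|\Sur(\cok(\mathbf{B}_{Q_n}),H)|\ge 1$ for every $H$. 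Feeding this lower bound back into $\mathbb{E}|\Hom(\cok(\mathbf{B}_{Q_n}),G)|=\sum_{H\le G}\mathbb{E}|\Sur(\cok(\mathbf{B}_{Q_n}),H)|$ and using the $\limsup\le|\Sg(G)|$ bound forces $\lim_n\mathbb{E}|\Sur(\cok(\mathbf{B}_{Q_n}),H)|=1$ for every $H$ by induction on $|H|$ (the base case $H=\{0\}$ is an identity, and at each step the other summands are pinned to their limits). The main obstacle is organizing this Möbius-type bookkeeping cleanly so that the induction on $|H|$ goes through without circularity; the analytic input is entirely contained in the one-line monotonicity of $|H_R(\mathbf{g})|$.
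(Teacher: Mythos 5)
Your central observation --- that $\mathbf{B}_R\mathbf{g}$ is Haar-uniform on $\bigoplus_{k}\langle \mathbf{g}(j):R(k,j)=1\rangle$, and that these subgroups, hence the kernel probabilities, are monotone in the prescription matrix --- is exactly the paper's key step, and that part of your argument is correct. The gap is in the second half, where you route through $\Hom$-moments and then need the lower bound $\liminf_n\mathbb{E}|\Sur(\cok(\mathbf{B}_{Q_n}),H)|\ge 1$ to pin each surjective moment. Your justification of that lower bound is invalid: from ``$\mathbb{E}|\Hom(\cok(\mathbf{B}_{Q_n}),G)|\ge 1$ via the trivial homomorphism,'' nonnegativity of surjective moments, and the relation $\mathbb{E}|\Hom(\cdot,G)|=\sum_{H\le G}\mathbb{E}|\Sur(\cdot,H)|$, one cannot conclude anything beyond $\mathbb{E}|\Sur(\cdot,\{0\})|=1$; a random group that is trivial with probability one satisfies all of these premises yet has $\mathbb{E}|\Sur(\cdot,H)|=0$ for every $H\neq\{0\}$. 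The identity you invoke, together with the upper bound $\limsup_n\mathbb{E}|\Hom(\cok(\mathbf{B}_{Q_n}),G)|\le|\Sg(G)|$, only controls the moments from above, so the induction on $|H|$ you sketch runs in the wrong direction and cannot close.

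The missing input has to come from the explicit formula, not from formal bookkeeping: since each summand satisfies $\prod_k|\mathbf{H}_{\mathbf{g},Q_n}(k)|^{-1}\ge |G|^{-n}$ whenever $\langle\mathbf{g}\rangle=G$, one gets
\[
\mathbb{E}|\Sur(\cok(\mathbf{B}_{Q_n}),G)|=\sum_{\substack{\mathbf{g}\in G^n\\ \langle\mathbf{g}\rangle=G}}\prod_{k=1}^n|\mathbf{H}_{\mathbf{g},Q_n}(k)|^{-1}\ \ge\ \Bigl(|G|^n-\sum_{G'\lneq G}|G'|^n\Bigr)|G|^{-n}\ \ge\ 1-|\Sg(G)|\,p^{-n},
\]
exactly as in \eqref{sq0}. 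Once you have this, the detour through $\Hom$-moments is unnecessary: the paper applies your monotonicity termwise to the surjective-moment sum itself (the index set $\{\mathbf{g}:\langle\mathbf{g}\rangle=G\}$ does not depend on the prescription matrix), obtaining $1-|\Sg(G)|p^{-n}\le \mathbb{E}|\Sur(\cok(\mathbf{B}_{Q_n}),G)|\le \mathbb{E}|\Sur(\cok(\mathbf{B}_{P_n}),G)|\to 1$, and concludes by the squeeze theorem. I recommend restructuring your proof this way; your Möbius-type pinning argument does work once the lower bound is in hand, but it is strictly more machinery than the problem requires.
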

\begin{proof}
Let $P$ be an $n\times n$ prescription matrix, and
let $G$ be a finite abelian $p$-group. For $\mathbf{g}\in G^n$, let us define $\mathbf{H}_{\mathbf{g},P}:[1,n]\to \Sg(G)$ by
\[\mathbf{H}_{\mathbf{g},P}(k)=\langle \mathbf{g}(j)\,:\, P(k,j)=1\rangle.\]

Then $\mathbf{B}_P\mathbf{g}$ is a uniform random element of $\bigoplus_{k=1}^n \mathbf{H}_{\mathbf{g},P}(k)$. Thus, as in the proof Lemma~\ref{lemmamoment}, we see that
\[\mathbb{E}|\Sur(\cok(\mathbf{B}_P,G))|=\sum_{\substack{\mathbf{g}\in G^n\\\langle\mathbf{g}\rangle=G}}\prod_{k=1}^n |\mathbf{H}_{\mathbf{g},P}(k)|^{-1}, \]
where as before $\langle\mathbf{g}\rangle$ is the subgroup generated by the components of $\mathbf{g}.$

If $P\le Q$ entrywise, then $\mathbf{H}_{\mathbf{g},P}(k)$ is a subgroup of $\mathbf{H}_{\mathbf{g},Q}(k)$ for any $\mathbf{g}\in G^n$ and $1\le k\le n$. Thus, it follows that
\[\mathbb{E}|\Sur(\cok(\mathbf{B}_P,G))|\ge \mathbb{E}|\Sur(\cok(\mathbf{B}_Q,G))|\ge 1-|\Sg(G)|p^{-n},\]
where the last inequality follows the same way as \eqref{sq0}. The lemma follows from the squeeze theorem.
\end{proof}

The next problem asks whether Theorem~\ref{thmmain} provides us the asymptotically sparsest weakly (or strongly) Cohen-Lenstra sequence of prescription matrices.
\begin{problem}
  Let $(P_n)$ be a weakly/strongly Cohen-Lenstra sequence of prescription matrices such that $P_n$ is an $n\times n$ matrix. Is it true that 
  \[\liminf_{n\to\infty}\frac{1}{n\log_p(n)}\sum_{i=1}^n\sum_{j=1}^n P_n(i,j)\ge 2?\]
\end{problem} 
\bibliography{references}
\bibliographystyle{plain}

\bigskip

\bigskip

\noindent Andr\'as M\'esz\'aros, \\
HUN-REN Alfr\'ed R\'enyi Institute of Mathematics, \\Budapest, Hungary,\\ {\tt meszaros@renyi.hu}
\end{document}